\documentclass [11pt,reqno]{amsart}
\usepackage {amsmath,amssymb,verbatim,geometry}
\usepackage[all]{xy}
\newif\ifpdf
\ifpdf
\usepackage[pdftex]{hyperref}
\else
\fi

\geometry{centering,vcentering,marginratio=1:1,vscale=0.75,hscale=0.71}

%%%%%%%%%%%%%%%%%%%%%%%%%%%%%%%%%%%%%%%%%%%%%%%%%%%%%%%%%%%%%%%%%%

\numberwithin{equation}{section}       % Number formulas within sections

\newtheorem{prop} {Proposition} [section]
\newtheorem{thm}[prop] {Theorem}

\newtheorem{lem}[prop] {Lemma}
\newtheorem{cor}[prop]{Corollary}
\newtheorem{prop-def}[prop]{Proposition-Definition}

\newtheorem{exa}[prop]{Example}

\theoremstyle{remark}
\newtheorem{rem}[prop]{Remark}
\newtheorem*{ackn}{Acknowledgements}

%%%%%%%%%%%%%%%%%%%%%%%%%%%%%%%%%%%%%%%%%%%%%%%%%%%%%%%%%%%%%%%%%%

\newcommand{\C}{{\mathbb{C}}}

\newcommand{\R}{{\mathbb{R}}}

\newcommand{\D}{\Delta}

\title[Regularity of geodesics]{Regularity of geodesics in the spaces of convex and plurisubharmonic functions}

\date{\today}

\author{Soufian Abja, S\l awomir Dinew}

\keywords{Monge-Amp\`ere equation, regularity}
 \address{Institute of Mathematics, Jagiellonian University, ul Lojasiewicza 6, 30-348 Krak\'ow, Poland}
\email{Soufian.Abja@im.uj.edu.pl}
 \address{Institute of Mathematics, Jagiellonian University, ul Lojasiewicza 6, 30-348 Krak\'ow, Poland}
\email{Slawomir.Dinew@im.uj.edu.pl }
\subjclass[2010]{Primary: 35J96, seconday: 35J70}

\begin{document}
\begin{abstract}
 In this note we investigate the regularity of geodesics in the space of convex and plurisubharmonic functions. In the real setting
 we prove (optimal) local $C^{1,1}$ regularity. We construct examples which prove that the global $C^{1,1}$ regularity  fails both in the real and complex case
 in contrast to the K\"ahler manifold setting. Finally we show a necessary and sufficient conditions for existence of a smooth geodesic between
 two smooth strictly convex functions. 
\end{abstract}

\maketitle

\section*{Introduction} Given a compact K\"ahler manifold $(X,\omega)$ the space of {\it K\"ahler potentials} is defined by
$$\mathcal P(X,\omega):=\lbrace u\in C^{\infty}(X,\mathbb R)|\ \omega+i\partial\overline{\partial} u>0\rbrace.$$

A classical construction of Mabuchi \cite{Mab87} endows the space 
$\mathcal P(X,\omega)$ with the structure of an infinite-dimensional Riemannian manifold. 
More precisely at each $u\in \mathcal P(X,\omega)$ the tangent space $T_u\mathcal P(X,\omega)$ is naturally identified with $C^{\infty}(X,\mathbb R)$ and the scalar product between two
{\it vectors} $f,g\in T_u\mathcal P(X,\omega)$ is given by
\begin{equation}\label{L2kahlercase}
 \langle f,g\rangle_u:=\int_Xfg(\omega+i\partial\overline{\partial} u)^n,
\end{equation}
where $n:=dim_{\mathbb C}X$. 

This abstract construction has attracted a lot of interest after the works of Semmes \cite{Sem92} and Donaldson\cite{D99}. In these papers it was shown that for a curve $u_t$ in $\mathcal P(X,\omega)$
the geodesic equation 
$$\ddot{u}_t-|\nabla \dot{u}|^2_{\omega+i\partial\bar{\partial}u}=0$$
in the above setting can be rewritten as a homogeneous complex Monge-Amp\`ere equation. Since then the notion of geodesics in the space of K\"{a}hler metrics
on compact K\"ahler manifolds has been playing a prominent role in K\"{a}hler geometry and has found a lot of applications especially in the uniqueness problem for extremal K\"ahler
metrics- see \cite{BB17} and references therein. 

A major analytical issue in the study of geodesics is their {\it optimal regularity}. The result of Chen \cite{Che00}, with complements by Blocki \cite{Bl12}, shows that geodesics  
always have bounded space-time Laplacian (so in particular they are $C^{1,\alpha}$-smooth
for any $\alpha< 1$). Recently, Chu-Tosatti-Weinkove \cite{CTW17} proved that  the geodesics are globally  $C^{1,1}$-regular in space and time directions.

In a similar vein such a metric construction has been applied to the space of {\it plurisubharmonic functions} in strictly pseudoconvex domains-\cite{Ras17, Abj19}. In this setting
we consider $\Omega\Subset\mathbb{C}^n$- a smoothly  bounded, strictly pseudoconvex domain: in particular
there exists a smooth function  $\rho$ defined  in neighborhood  $\Omega '$ of $\bar{\Omega}$
such that  
$$\Omega=\{z\in \Omega' \;\;| \rho(z)<0\},\ \partial\Omega=\{\rho=0\}$$
 with $d\rho\neq 0$ on $\partial\Omega$ and  $dd^c \rho >0$ on $\overline{\Omega}$,
where
$$
d:= \partial+ \bar{\partial}\;,\; d^c:=\frac{i}{2\pi}(\partial-\bar{\partial}).
$$

Then the  Mabuchi space $\mathcal {H}$ of smooth, strictly plurisubharmonic functions is defined by
 $$\mathcal{H}:=\{\varphi\in C^{\infty}(\bar{\Omega})|\ dd^c\varphi>0\ {\rm in}\ \overline{\Omega}, \varphi=0\; {\rm on}\ \partial\Omega\}.$$
   The space $\mathcal {H}$ again can be endowed with the structure of an infinite dimensional Riemannian manifold, whose tangent space 
   $T_{\varphi}\mathcal{H}$ can be identified with the set of functions
   in  $C^{\infty}(\bar{\Omega},\R)$, vanishing  at the boundary of $\Omega$. The $L^2$ Mabuchi metric on $\mathcal{H}$ is given by  
   $$ 
\langle\psi_1,\psi_2\rangle_\varphi:=\int_{\Omega}\psi_1\psi_2 (dd^c\varphi)^n,
$$
for any $\varphi\in\mathcal{H},\psi_1,\psi_2 \in T_{\varphi}\mathcal{H}$.
 The geodesics between two points $\varphi_0$, $\varphi_1$ in $\mathcal{H}$ are defined as the minimizers of
the energy functional
$$
\varphi\longmapsto H(\varphi):= \frac{1}{2}\int_{0}^1\int_{\Omega}(\dot{\varphi}_t)^2( dd^c\varphi_t)^n
,$$
where $\varphi=\varphi_t$ is a path in $\mathcal{H}$ joining $\varphi_0$ and $\varphi_1$. The geodesic equation is obtained by computing the Euler-Lagrange equation of the functional $H$.
It reads
\begin{equation*}\label{geodesicPSH}
    \ddot{\varphi}(t)-|\nabla\;\dot{\varphi}(t)|^{2}_{dd^c\varphi(t)}=0,\;\;
\end{equation*}
where $\nabla$ is the gradient with respect to the metric $dd^c\varphi$.

Just as in the K\"ahler case the existence of a geodesic between  any two points $\varphi_0$ and $\varphi_1$ from $\mathcal{H}$ reduces to the solution of the following Dirichlet problem
\begin{equation}\label{MAcomplex}
\begin{cases}
  \phi\in PSH(\Omega\times A)\cap C(\overline{\Omega\times A});\\
    (dd^c_{z,\zeta}\phi)^{n+1}=0 &{\rm in}\  \hbox{$\Omega\times A$}; \\
    \phi=\varphi_0 &{\rm in}\ \hbox{$\Omega\times \{|z|=1\}$}; \\
   \phi=\varphi_1 &{\rm in}\ \hbox{$\Omega\times \{|z|=e\}$}; \\
    \phi=0 &{\rm in}\ \hbox{$\partial \Omega\times A$,}
 \end{cases} 
\end{equation}
where $PSH$ stands for the class of plurisubharmonic functions,  $A=\{z\in \C| 1<|z|<e\}$ denotes an annulus in $\C$  and $\phi(z,\zeta)=\varphi_t(z)$ with $t=\log|\zeta|$. More precisely if the solution
$\phi$ is $C^{\infty}$ smooth and strictly plurisubharmonic in the space variables then $t\rightarrow\phi(z,t)$ is the geodesic joining $\varphi_0$ and $\varphi_1$.

The above equation is known as the homogeneous complex Monge-Amp\`ere equation. 

Looking at the {\it real} counterpart of the constructions above it is natural to consider smoothly bounded {\it strictly convex} domains
$U$. Then the analog of $\mathcal H$ is the space $\mathcal S$ of {\it strictly convex} functions i.e.

$$\mathcal{S}:=\{u\in C^{\infty}(\overline{U})| D^2u>0\ {\rm on}\ \overline{U} , u=0\; {\rm on}\ \partial\Omega\}.$$

The $L^2$ metric is given by

$$\langle f_1,f_2\rangle_u:=\int_{U}f_1f_2det(D^2u)$$
for any $u\in\mathcal{S}\ ,f_1,\ f_2 \in T_{u}\mathcal{S}$, where the tangent space is again identified with the 
functions in $C^{\infty}(\bar{U},\mathbb R)$ that vanish at $\partial U$.

Not surprisingly the corresponding geodesic equation can also be rewritten as a homogeneous {\it real} Monge-Amp\`ere equation. Similarly to the complex case we seek a convex function $u$ in $U\times (0,1)$, continuous up to the boundary of $U\times(0,1)$,  such that
\begin{equation}\label{ss}
\begin{cases}
\det(D^2 _{x,t}u)=0\;{\rm in}\ \;\; U\times (0,1);\\
u=\varphi_0\;\; {\rm in}\ \;\; U\times \{0\};\\
u=\varphi_1\;\; {\rm in}\ \;\; U\times \{1\};\\
u=0\;\; {\rm in}\ \;\; \partial U\times (0,1),
\end{cases}
\end{equation}
where $D^2_{x,t}u$ is the Hessian of $u$ with respect to $(x,t)$ in $\Omega\times (0,1)$, and $\varphi_0$, $\varphi_1$ are two strictly convex $C^{\infty}$-smooth functions vanishing on $\partial{U}$.
Again $u(x,t)$ will be the geodesic provided that the solution is smooth and strictly convex in the space variables.

It is a basic fact that suitably defined {\it weak} solutions to (\ref{MAcomplex}) and (\ref{ss}) exist and are unique- see the next section for more details. It is customary
to call these solutions {\it weak geodesics} although, strictly speaking, these need not be curves in $\mathcal H$ or $\mathcal S$, respectively.

Just as in the K\"ahler case the optimal regularity of weak geodesics is one of the main problems in the theory. A lot is known about the regularity of the solutions in smoothly bounded strictly convex 
domains- see \cite{CNS86} or on general smooth domains in presence of subsolutions- \cite{Gu}. In the case of strip type unbounded domains the regularity issues were analyzed in \cite{LW15}. In particular the optimal regularity one can expect in general is $C^{1,1}$- see \cite{CNS86}.

Back to problem (\ref{ss}) we remark that the boundary
data  is of very special form. In our setting, 
however, the domain $U\times(0,1)$ has corners and the
regularity of the solutions in such a domain is not sufficiently well understood.  In the complex setting even less is known- see \cite{Abj19} for some partial results.

The goal of the note is to prove global Lipschitz and almost global $C^{1,1}$ estimates for the problem (\ref{ss}):

\begin{thm}\label{22} Let $U$ be strictly convex domain, $\varphi_0$, $\varphi_1$ be two functions from $\mathcal S$. Then the weak geodesic $u$ is $C^{1,1}$ regular away from corner points 
$\partial U\times\lbrace0\rbrace\cup \partial U\times\lbrace1\rbrace$. For any neighborhood $W$ of $\partial U\times\lbrace0,1\rbrace$ such that $U\times[0,1]\setminus W$ is
convex
there is a constant $C_W$ dependent on $W, U$ and
$\varphi_0,\varphi_1$ such that
$$||u||_{C^{1,1}(\overline{U\times[0,1]}\setminus W)}\leq C_W.$$
\end{thm}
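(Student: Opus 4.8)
The plan is to reduce everything to an explicit Legendre--transform formula for the weak geodesic. Put $\psi_i:=\varphi_i^*$, i.e.\ $\psi_i(y)=\sup_{x\in\bar U}(\langle x,y\rangle-\varphi_i(x))$, and $\psi_t:=(1-t)\psi_0+t\psi_1$. I claim that for $t\in(0,1)$
\begin{equation*}
 u(x,t)=\psi_t^*(x)=\sup_{y\in\R^n}\bigl(\langle x,y\rangle-(1-t)\psi_0(y)-t\psi_1(y)\bigr).
\end{equation*}
This follows from the fact that the weak geodesic is the convex envelope of the boundary data, equivalently the supremum of all $(x,t)$-affine minorants of it: an affine function $\langle a,x\rangle+bt+c$ lies below $\varphi_0$ on $\bar U\times\{0\}$ and below $\varphi_1$ on $\bar U\times\{1\}$ exactly when $c\le-\psi_0(a)$ and $b+c\le-\psi_1(a)$, and any such function is then automatically $\le 0$ on $\partial U\times[0,1]$ (being affine in $t$ and $\le0$ on the two end faces there); optimising over $b,c$ for fixed $a$ and $t$ gives the formula, while the existence of a convex minorant of the data vanishing on $\partial U\times[0,1]$ --- e.g.\ $B\rho(x)$ for a strictly convex defining function $\rho$ of $U$ and $B$ large --- and of the convex minorant $(1-t)\varphi_0(x)+t\varphi_1(x)-At(1-t)$ (convex for $A$ large) shows the envelope takes the prescribed values off the corners. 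Since $\varphi_i\in C^\infty(\bar U)$ with $c_iI\le D^2\varphi_i\le\Lambda_iI$, $0<c_i\le\Lambda_i$, the duality between strong convexity and Lipschitz gradients yields: $\psi_i\in C^{1,1}(\R^n)$ with $\|D^2\psi_i\|_\infty\le c_i^{-1}$; $\psi_i$ is smooth on the interior of the convex body $K_i:=\nabla\varphi_i(\bar U)$ with $D^2\psi_i=(D^2\varphi_i\circ\nabla\psi_i)^{-1}\in[\Lambda_i^{-1}I,c_i^{-1}I]$ there; and for $y\notin\operatorname{int}K_i$ one has $\nabla\psi_i(y)\in\partial U$ (indeed $\psi_i$ is the support function of $U$ off $K_i$).

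There are two consequences. First, $\varphi_t:=u(\cdot,t)=\psi_t^*$ is the Legendre transform of a globally $C^{1,1}$ convex function, hence $D^2_xu(x,t)=D^2\varphi_t(x)\ge\mu I$ with $\mu:=\min(c_0,c_1)>0$ --- a free lower bound. Second, at the maximiser $y=y(x,t)$ (unique for $x\in U^\circ$) one has $x=\nabla\psi_t(y)$, $\nabla_xu=y$, $\partial_tu=-(\psi_1-\psi_0)(y)$, $\partial_t\nabla_xu=-D^2_xu\,\nabla(\psi_1-\psi_0)(y)$, and, whenever $D^2\psi_t(y)>0$,
\begin{equation*}
 D^2_xu=\bigl(D^2\psi_t(y)\bigr)^{-1},\qquad \partial_t^2u=\nabla(\psi_1-\psi_0)(y)^{T}\,D^2_xu\,\nabla(\psi_1-\psi_0)(y);
\end{equation*}
these are to be read in the a.e.\ (Alexandrov) sense, or obtained by first mollifying $\psi_i$ and passing to the limit. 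As $\nabla\psi_i(y)\in\bar U$, we have $|\nabla(\psi_1-\psi_0)(y)|\le\operatorname{diam}U$; so once we prove an \emph{upper} bound $D^2_xu\le CI$ on $K:=\overline{U\times[0,1]}\setminus W$, the displayed identities bound $\partial_t\nabla_xu$ and $\partial_t^2u$ too, and --- $u$ being convex on the convex set $K$ --- $\|u\|_{C^{1,1}(K)}\le C_W$. The task thus becomes: bound $D^2\psi_t(y(x,t))$ \emph{below} on $K$, uniformly in terms of $\delta:=\dist(K,\partial U\times\{0,1\})>0$.

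The core is the geometry of the two ``sections'' $p_0:=\nabla\psi_0(y)$, $p_1:=\nabla\psi_1(y)\in\bar U$ of $y=y(x,t)$. Differentiating the defining supremum in $y$ gives $(1-t)p_0+tp_1=x$, and Legendre duality gives $u(x,t)=(1-t)\varphi_0(p_0)+t\varphi_1(p_1)$ (this records the line segment of the graph of $u$ through $(x,t,u(x,t))$). Since $\varphi_i<0$ on $U^\circ$ the supremum defining $u(x,t)$ has every term strictly negative for $x\in U^\circ$, so $u<0$ on $U^\circ\times[0,1]$; hence for $x\in U^\circ$ the points $p_0,p_1$ cannot both lie on $\partial U$ (that would force $u(x,t)=(1-t)\varphi_0(p_0)+t\varphi_1(p_1)=0$), i.e.\ $y(x,t)\in\operatorname{int}K_0\cup\operatorname{int}K_1$, so $D^2\psi_t(y)\ge(1-t)\Lambda_0^{-1}I$ or $D^2\psi_t(y)\ge t\Lambda_1^{-1}I$. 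It remains to bound the surviving multiplier below on $K$. If $t\in[\delta,1-\delta]$ this is immediate. If instead $\dist(x,\partial U)\ge\delta$, say $t\le\tfrac12$: either $p_0\in U^\circ$, whence $D^2\psi_t(y)\ge(1-t)\Lambda_0^{-1}I\ge\tfrac12\Lambda_0^{-1}I$; or $p_0\in\partial U$, in which case $x$ lies on the chord from $p_0\in\partial U$ to $p_1\in\bar U$ at parameter $t$, so $\delta\le\dist(x,\partial U)\le|x-p_0|=t|p_1-p_0|\le t\operatorname{diam}U$ forces $t\ge\delta/\operatorname{diam}U$, while strict convexity of $U$ forces $p_1\in U^\circ$ (otherwise $x=(1-t)p_0+tp_1$ is a non-trivial convex combination of two boundary points of $\bar U$, hence not in $U^\circ$), whence $D^2\psi_t(y)\ge t\Lambda_1^{-1}I\ge(\delta/\operatorname{diam}U)\Lambda_1^{-1}I$. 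In all cases $D^2\psi_t(y(x,t))\ge c'(\delta,U,\varphi_0,\varphi_1)I$, which completes the estimate (the case $x\in\partial U$ following by continuity from $U^\circ$, using $D^2\varphi_t\in[\mu I, c'^{-1}I]$ on $U^\circ$). I expect this dichotomy --- together with the bookkeeping needed to run the envelope identities with only a priori $C^{1,1}$ regularity and to keep all constants uniform --- to be the main work; strict convexity of $U$ enters exactly through ``boundary points of $\bar U$ are extreme'' and ``$\dist(x,\partial U)\le|x-p|$ for $p\in\partial U$''.

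For the purely qualitative claim, every non-corner point $p$ lies in some admissible $K$: if $p=(x_0,t_0)$ with $t_0\in(0,1)$, take $W=\bar U\times([0,\varepsilon)\cup(1-\varepsilon,1])$ with $\varepsilon<\min(t_0,1-t_0)$, so $K=\bar U\times[\varepsilon,1-\varepsilon]$ is convex and contains $p$; if $p=(x_0,0)$ with $x_0\in U^\circ$, take $W=(\bar U\setminus\bar U')\times[0,1]$ with $U'\Subset U$ a convex domain containing $x_0$, so $K=\bar U'\times[0,1]$ is convex, avoids the corners, and contains $p$. Applying the quantitative estimate to such $K$ gives $C^{1,1}$ regularity near $p$.
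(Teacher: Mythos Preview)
Your argument is correct and takes a genuinely different route from the paper.  The paper follows the Caffarelli--Nirenberg--Spruck scheme: at each point $(\bar x,\bar t)$ one subtracts a supporting affine function, shows by a comparison argument (Lemma~2.1) that the contact set reaches $\partial(U\times(0,1))$, then proves the key \emph{Claim} that the contact set is a single segment with at most one endpoint on the corner, and finally bounds the quadratic growth $u(x,t)\le C|(x,t)-(\bar x,\bar t)|^2$ by an explicit geometric computation with angles and ratios of lengths (see the estimate of $|\hat x-x^1|/|(x,t)-(\bar x,\bar t)|$).  The $C^{1,1}$ conclusion then follows from the abstract Lemmas~1.3--1.4.

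Your approach instead writes the geodesic in closed form as $u(\cdot,t)=((1-t)\varphi_0^*+t\varphi_1^*)^*$ and reads off the Hessian via Legendre duality.  The geometric heart is the same --- the points $p_i=\nabla\psi_i(y)$ are exactly the endpoints $\xi,\eta$ in the paper's Claim, and ``not both $p_i\in\partial U$'' is the paper's observation that $Z$ meets each face $\{t=0\},\{t=1\}$ in at most one point --- but you replace the CNS angle computation by the clean inequality $\delta\le|x-p_0|=t|p_1-p_0|\le t\operatorname{diam}U$.  Your method buys additional information for free (the uniform lower bound $D^2_xu\ge\min(c_0,c_1)I$ and the explicit identities for $\partial_t\nabla_xu$, $\partial_t^2u$), and in fact your formula makes the later Theorem~\ref{smoothgeodesic} almost immediate: $\partial\varphi_0(U)=\partial\varphi_1(U)$ is precisely $K_0=K_1$, which keeps $y(x,t)$ in the interior of both and yields uniform strong convexity of $\psi_t$.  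The paper's approach, on the other hand, is more robust: it does not rely on an explicit representation and transfers without change to the unbounded logarithmic image in Corollary~\ref{corol}.

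One point you flag as ``bookkeeping'' does need care: the implication ``$D^2\psi_t(y(x,t))\ge c'I$ pointwise $\Rightarrow$ $D^2_xu\le (c')^{-1}I$ on $K$'' is not literally a pointwise duality, since $\psi_t$ is only $C^{1,1}$.  The clean fix is to note that your dichotomy in fact shows $\psi_t-\tfrac{c'}{2}|\cdot|^2$ is convex on each of the open sets $\operatorname{int}K_0$ and $\operatorname{int}K_1$ (not merely at $y(x,t)$), and that the image $\nabla\varphi_t([x,x'])$ of a segment in $K_t$ stays in one of these sets; then $\langle x-x',y-y'\rangle\ge c'|y-y'|^2$ holds along the path and integrates to the desired Lipschitz bound on $\nabla\varphi_t$.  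Alternatively, mollifying $\varphi_i$ (not $\psi_i$) by extending slightly past $\partial U$ preserves all the structure and makes every step classical.
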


We remark that the above result is stronger than merely local regularity, as it shows that potential blow-up may occur only at the corners.

Analogous regularity for weak plurisubharmonic geodesics is not known (see \cite{Abj19} for partial results in the case of $\Omega$ being an Euclidean ball).
We can however show the corresponding result for weak geodesics in $\mathcal H$ joining two {\it toric} plurisubharmonic functions. Recall that a domain $\Omega\subset \mathbb C^n$ is {\it Reinhardt} if it is 
invariant with respect to the standard $n$-dimensional torus action on the coordinates. The {\it axis set} is simply  
$$M:=\lbrace z\in \Omega\ |\ \exists i\in\lbrace1,\cdots,n\rbrace:\ z_i=0\rbrace.$$
\begin{cor}\label{corol}
 Let $\Omega$ be a smoothly bounded strictly pseudoconvex Reinhardt domain. Suppose that $\phi$ is a weak geodesic solving the problem (\ref{MAcomplex}). If $\varphi_1,\varphi_2\in\mathcal H$ are toric in the space variables i.e.
 for all $z,z'\in \Omega$ satisfying $|z|=|z'|$ one has $\varphi_j(z)=\varphi_j(z'),\ j=1,2$ then $\phi$ is $C^{1,1}$ away from the corner $\partial\Omega\times\partial A$ 
 and  $M\times A$.
\end{cor}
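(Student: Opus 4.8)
The plan is to deduce Corollary~\ref{corol} from the real result, Theorem~\ref{22}, via the classical correspondence between toric plurisubharmonic functions and convex functions. Consider the map $L\colon(\C^{*})^{n+1}\to\R^{n+1}$, $L(z,\zeta)=(\log|z_1|^2,\dots,\log|z_n|^2,\log|\zeta|^2)$; it is a surjective submersion with compact $(n+1)$-torus fibres, smooth and admitting smooth local sections away from the coordinate hyperplanes. The first step is to observe that the weak geodesic $\phi$ is invariant under the standard $\T^{n}$-action on the $z$-variables and depends on $\zeta$ only through $|\zeta|$: the domain $\Omega\times A$, the operator $(dd^c)^{n+1}$, and the boundary data of \eqref{MAcomplex} are all preserved by this $\T^n$-action on $z$ and by the rotations $\zeta\mapsto e^{i\theta}\zeta$ (here we use that $\Omega$ is Reinhardt and that $\varphi_0,\varphi_1$ are toric in the space variables), so the claim follows from the uniqueness of the weak solution. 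Consequently $\phi=v\circ L$ on $(\Omega\setminus M)\times A$ for a bounded function $v$; boundedness is clear since $\phi\in C(\overline{\Omega\times A})$.

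Next I would identify the problem solved by $v$. The set $\widetilde\Omega:=L\big((\Omega\setminus M)\big)$ (projected to the first $n$ coordinates) is open and convex, convexity being a standard consequence of the pseudoconvexity of the Reinhardt domain $\Omega$; moreover, using a Reinhardt defining function for $\Omega$ (e.g.\ the $\T^n$-average of $\rho$, written as $\tilde\rho\circ L$ with $\tilde\rho$ strictly convex wherever all coordinates are nonzero) one sees that $L(\partial\Omega\setminus M)$ is a smooth, strictly convex hypersurface bounding $\widetilde\Omega$, while $\widetilde\Omega$ itself is unbounded precisely in the directions corresponding to $M$. Since $\phi$ is plurisubharmonic and toric, $v$ is convex; the homogeneous complex Monge--Amp\`ere equation $(dd^c\phi)^{n+1}=0$ becomes, in the Bedford--Taylor/Alexandrov sense, $\det\big(D^2_{\xi,t}v\big)=0$ on $\widetilde\Omega\times(0,1)$ (after an affine rescaling in the $\zeta$-direction so that $A$ corresponds to the slab $(0,1)$), and the boundary conditions of \eqref{MAcomplex} turn into exactly those of \eqref{ss}, with $\varphi_0,\varphi_1$ replaced by their (smooth, strictly convex) log-transforms. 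In other words, $v$ is the weak geodesic of problem \eqref{ss} on $\widetilde\Omega\times(0,1)$, the only caveat being that $\widetilde\Omega$ is noncompact.

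I would then invoke Theorem~\ref{22}. The interior gradient and Laplacian bounds, together with the boundary estimates along the smooth strictly convex wall $\partial\widetilde\Omega\times(0,1)$ and along the caps $\widetilde\Omega\times\{0,1\}$ (in the spirit of \cite{CNS86}), are local at the boundary, and the only genuinely global ingredient is an $L^\infty$ bound for $v$, which we already have. Hence they yield, on every compact subset of $\overline{\widetilde\Omega\times[0,1]}$ disjoint from the corner $\partial\widetilde\Omega\times\{0,1\}$, a uniform $C^{1,1}$ bound for $v$ --- irrespective of the noncompactness of $\widetilde\Omega$. (An alternative is to exhaust $\Omega$ by smoothly bounded strictly pseudoconvex Reinhardt subdomains, apply Theorem~\ref{22} on each, and pass to the limit using the uniform estimates and uniqueness.) Transporting this back through $L$ --- which is a submersion with smooth local sections off the coordinate hyperplanes, so $C^{1,1}$ regularity is preserved under $v\mapsto v\circ L$ --- gives $\phi\in C^{1,1}$ on $\overline{\Omega\times A}$ away from $L^{-1}\big(\partial\widetilde\Omega\times\{0,1\}\big)=\partial\Omega\times\partial A$ and away from $M\times A$, which is the assertion.

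The main obstacle is precisely this last reduction: one must check carefully that none of the estimates underlying Theorem~\ref{22} see the unboundedness of $\widetilde\Omega$ (nor the fact that $\partial\widetilde\Omega$ is only \emph{locally}, not uniformly, strictly convex), or else carry out the exhaustion argument with the requisite uniform a priori estimates and a compactness/uniqueness passage to the limit. A secondary, more routine technical point is the justification of the change-of-variables identity $(dd^c\phi)^{n+1}\leftrightarrow\det(D^2 v)$ for Monge--Amp\`ere measures across the singular locus $M$ of $L$, together with the verification that the log-transforms of $\varphi_0,\varphi_1$ genuinely satisfy the hypotheses imposed in \eqref{ss}.
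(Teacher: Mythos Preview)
Your overall framework --- pass to the logarithmic picture, identify $\phi$ with a convex function $v$ solving the real homogeneous Monge--Amp\`ere problem on $\widetilde\Omega\times(0,1)$, and then try to import Theorem~\ref{22} --- is exactly the paper's starting point. The gap is in the sentence ``the only genuinely global ingredient is an $L^\infty$ bound for $v$''. That is not true, and the bulk of the paper's proof is devoted to precisely the global issue you are waving away.

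Recall how the proof of Theorem~\ref{22} works: through every interior point $(\bar x,\bar t)$ one finds a line segment $[(x^0,0),(x^1,1)]$ along which the solution is affine, and the $C^{1,1}$ bound at $(\bar x,\bar t)$ (inequality~(\ref{needed}) and the lines following it) depends on the ratio $\dfrac{|(x^0,t^0)-(x^1,t^1)|}{|(\bar x,\bar t)-(x^0,t^0)|}$, hence on the \emph{length} of that segment. When $U$ is bounded this length is controlled by $\mathrm{diam}(U)$; when $\widetilde\Omega$ is unbounded (i.e.\ $M\neq\emptyset$) there is a priori no such control. Worse, before one even has a segment one must rule out the possibility that the zero set of the normalized $v$ contains an infinite ray, and the paper does this separately. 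The heart of the paper's argument is then a quantitative bound on $|x^1-x^0|$: using Lemma~\ref{parallel}, long segments must be nearly parallel to a direction $r$ in the characteristic cone of $\widetilde\Omega$, and one shows (the ``Claim'' and Cases~1--2) that $-D_r\varphi(x^0)\ge\delta>0$, which forces $|\psi(x^0)-\psi(x^1)|\gtrsim|x^1-x^0|$ and contradicts the $L^\infty$ bound. None of this is local, and none of it appears in your proposal.

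Your exhaustion alternative does not save the day either: on a Reinhardt subdomain $\Omega_k\Subset\Omega$ the weak geodesic for the \emph{original} data $\varphi_0,\varphi_1$ is not defined (they do not vanish on $\partial\Omega_k$), while the restriction of $\phi$ to $\partial\Omega_k\times A$ is not known to be smooth or strictly plurisubharmonic, so Theorem~\ref{22} does not apply to it. In short, the reduction to the real problem is correct, but you must actually prove the uniform bound on the length of the affine segments in the unbounded logarithmic image; this is the substantive content of the corollary.
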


Given these results and the theory in the K\"ahler case it is natural to ask whether global $C^{1,1}$ bounds could be obtained. A bit surprisingly we show (see Examples \ref{realexample} and \ref{complexexample})
that this is {\bf not} the case: there exist pairs of points in $\mathcal S$ and $\mathcal H$ such that the weak geodesics joining them
are not globally $C^{1,1}$.

With Example \ref{realexample} in mind it is natural to ask what are the exact conditions guaranteeing that two points $\varphi,\psi\in\mathcal S$ can be joined by a smooth geodesic (i.e. problem (\ref{ss}) admits a
solution $u$ which is smooth in space and time
and is furthermore strictly convex up to the boundary for a fixed time). Exploiting the ideas of Li and Wang from \cite{LW15} we can
get an exact answer- our second main result in this note:
\begin{thm}\label{smoothgeodesic}
Let $\varphi,\psi\in\mathcal S$. Then $\varphi$ and $\psi$ can be joined by smooth geodesic if and only if the gradient image of $\varphi$
$$\partial \varphi(U):=\lbrace p\in\mathbb R^n|\ \exists x\in U:\ Du(x)=p\rbrace$$
equals the gradient image of $\psi$.  
\end{thm}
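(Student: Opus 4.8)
The plan is to reduce (\ref{ss}) to a \emph{linear} problem via the partial Legendre transform in the space variable, following the ideas of \cite{LW15}. For a curve $u(x,t)$ that is smooth and strictly convex in $x$ on $U\times(0,1)$, set $u^{*}(p,t):=\sup_{x\in U}(\langle p,x\rangle-u(x,t))$; writing out the block structure of $D^{2}_{x,t}u$ at the point $x=X(p,t)$ realising the supremum gives the identities
$$\partial_{tt}u^{*}(p,t)=-\bigl(u_{tt}-u_{xt}^{T}u_{xx}^{-1}u_{xt}\bigr),\qquad \det D^{2}_{x,t}u=\det(u_{xx})\bigl(u_{tt}-u_{xt}^{T}u_{xx}^{-1}u_{xt}\bigr).$$
Since $\det(u_{xx})>0$, the equation $\det D^{2}_{x,t}u=0$ is therefore \emph{equivalent} to $u^{*}(p,\cdot)$ being affine in $t$, i.e. $u^{*}(p,t)=(1-t)\varphi^{*}(p)+t\psi^{*}(p)$; the endpoint conditions pass to $u^{*}(\cdot,0)=\varphi^{*}$, $u^{*}(\cdot,1)=\psi^{*}$ because $\varphi,\psi$ are convex and lower semicontinuous. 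Thus building a smooth geodesic between $\varphi$ and $\psi$ amounts to showing that the linear interpolation $w_{t}:=(1-t)\varphi^{*}+t\psi^{*}$ has, for every $t$, a Legendre dual lying in $\cS$. The dictionary one needs for a single $v\in\cS$ is: $v^{*}$ is a finite convex function on $\R^{n}$, which is $C^{\infty}$ and strictly convex exactly on the open set $\partial v(U)$, coincides with the support function $h_{U}(p):=\sup_{x\in U}\langle p,x\rangle$ of $U$ off $\partial v(U)$, and is only $C^{1,1}$ --- not $C^{2}$ --- across the smooth hypersurface $\partial(\partial v(U))=\nabla v(\partial U)$: at such a point $p_{0}$ one has $\nabla v^{*}(p_{0})\in\partial U$ while $D^{2}v^{*}$ equals the positive definite $(D^{2}v)^{-1}$ from the inside and the singular $D^{2}h_{U}$ from the outside.

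For the \emph{only if} direction, let $u$ be a smooth geodesic, so $u(\cdot,t)\in\cS$ for each $t$, whence $w_{t}$ must be $C^{\infty}$ wherever its gradient falls inside the open set $U$. Suppose, for contradiction, $P_{0}:=\partial\varphi(U)\neq P_{1}:=\partial\psi(U)$. Both sets are connected, open and contain the origin (as $\varphi,\psi$ attain an interior minimum), so an elementary argument forces at least one of $\partial P_{0}\cap P_{1}$, $\partial P_{1}\cap P_{0}$ to be non-empty; say $p_{0}\in\partial P_{0}\cap P_{1}$. Near $p_{0}$ the function $\psi^{*}$ is $C^{\infty}$ and strictly convex while $\varphi^{*}$ has the $C^{1,1}\setminus C^{2}$ edge behaviour above; hence $w_{t}$ is $C^{1,1}$ but not $C^{2}$ at $p_{0}$ for every $t\in[0,1)$, with $D^{2}w_{t}$ nonetheless positive definite on both sides of $\partial P_{0}$. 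Its gradient there is $\nabla w_{t}(p_{0})=(1-t)\nabla\varphi^{*}(p_{0})+t\nabla\psi^{*}(p_{0})$, a convex combination of a point of $\partial U$ and an interior point of $U$, hence a point of $U$ when $t\in(0,1)$. By Legendre duality $u(\cdot,t)=w_{t}^{*}$ then suffers a jump of $D^{2}$ at the interior point $\nabla w_{t}(p_{0})\in U$, contradicting $u(\cdot,t)\in C^{\infty}(\overline U)$. Therefore $P_{0}=P_{1}$.

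For the \emph{if} direction, assume $P:=\partial\varphi(U)=\partial\psi(U)$ and define $u(x,t):=w_{t}^{*}(x)=\bigl((1-t)\varphi^{*}+t\psi^{*}\bigr)^{*}(x)$. On $P$ both $\varphi^{*}$ and $\psi^{*}$ are $C^{\infty}$ and strictly convex, so $w_{t}$ is too, while off $P$ both equal $h_{U}$, hence $w_{t}=h_{U}$ there and its singular locus $\partial P$ is carried by $\nabla w_{t}=\nabla h_{U}$ into $\partial U$; it follows (this is where the boundary estimates of \cite{LW15} are invoked) that $\nabla w_{t}$ is a diffeomorphism of $\overline P$ onto $\overline U$ and that $u(\cdot,t)$ is $C^{\infty}$ and strictly convex on $\overline U$. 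Moreover $u(\cdot,t)$ vanishes on $\partial U$: writing it as the infimal convolution of $(1-t)\varphi(\,\cdot\,/(1-t))$ and $t\psi(\,\cdot\,/t)$ and using the strict convexity of $U$, an extreme point argument forces the trivial decomposition over $x\in\partial U$, so $u(x,t)=(1-t)\varphi(x)+t\psi(x)=0$. Finally $u(\cdot,0)=\varphi$, $u(\cdot,1)=\psi$ since $\varphi^{**}=\varphi$, $\psi^{**}=\psi$; $u$ is jointly smooth on $\overline U\times[0,1]$ by the implicit function theorem (the Jacobian $(1-t)D^{2}\varphi^{*}+tD^{2}\psi^{*}$ being positive definite on $\overline P$); and $\det D^{2}_{x,t}u=0$ in $U\times(0,1)$ by the first step. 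So $u$ is the required smooth geodesic.

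The main obstacle is the regularity bookkeeping packaged in the dictionary above. For the \emph{only if} direction the crucial point is that $v^{*}$ is genuinely \emph{not} $C^{2}$ across the edge $\partial(\partial v(U))$ --- the Hessian jumps there because the graph of $v^{*}$ passes from being curved in all directions to being ruled along the lines on which $\nabla h_{U}$ is constant --- and that in the non-matching case this edge is transported by $\nabla w_{t}$ into the \emph{interior} of $U$. For the \emph{if} direction the corresponding statement is the $C^{\infty}$ regularity of $u(\cdot,t)$ \emph{up to} $\partial U$, a genuine boundary estimate for a degenerate real Monge--Amp\`ere problem supplied by the analysis of Li and Wang \cite{LW15}. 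By contrast, the algebraic Legendre identities, the connectedness argument separating $P_{0}$ from $P_{1}$, and the extreme point computation of the boundary values are routine.
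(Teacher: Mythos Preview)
Your Legendre-transform approach is correct in outline and takes a genuinely different route from the paper.  The paper argues geometrically: from the proof of Theorem~\ref{22} it knows that through every interior point the weak geodesic is affine along a segment $[(\xi,0),(\eta,1)]$ with $\xi,\eta\in\overline U$, and that $D\varphi(\xi)=D\psi(\eta)$ when both endpoints are interior.  For the ``if'' direction it shows that $\partial\varphi(U)=\partial\psi(U)$ forces every segment to have both endpoints in $U$, then applies Lemma~\ref{LiWang} (your implicit-function step in disguise) to get joint smoothness, and differentiates the formula $u=t\psi(\eta)+(1-t)\varphi(\xi)$ to obtain $D^2_xu>0$.  For the ``only if'' direction it shows that if the gradient images differ then a whole ball of segments must hit $\partial U\times\{1\}$; a non-injectivity argument then produces two segments sharing a corner endpoint, and an elementary integral estimate forces $D^2_{x_1x_1}u$ to blow up, contradicting smoothness.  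The two pictures are dual: fixing $p$ in your transform corresponds to fixing the common space-gradient along a segment.  Your route makes the role of $\partial\varphi(U)$ transparent and gives an explicit formula $u=((1-t)\varphi^*+t\psi^*)^*$ for the geodesic; the paper's route avoids the boundary bookkeeping for $\varphi^*,\psi^*$ and uses only the already-established segment structure.

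There is one point in your ``only if'' argument that needs more care.  The Schur identity $\partial_{tt}u^*=-(u_{tt}-u_{xt}^Tu_{xx}^{-1}u_{xt})$ is valid only where the maximiser $X(p,t)$ lies in the \emph{open} set $U$; from it alone you cannot conclude $u^*(p,t)=w_t(p)$ for \emph{every} $p$, and without this the step ``$u(\cdot,t)=w_t^*$ fails to be $C^2$ at $\nabla w_t(p_0)$'' is unjustified precisely at the critical $p_0\in\partial P_0\cap P_1$, where $X(p_0,0)\in\partial U$.  The gap is fillable: when $X(p,t)\in\partial U$ one checks directly that $X(p,t)=\nabla h_U(p)$ and $u^*(p,t)=h_U(p)$; then, using uniqueness of the maximiser (strict concavity) together with $C^1$-continuity of $t\mapsto u^*(p,t)$, one sees that the set $\{t:X(p,t)\in\partial U\}$ is either empty or all of $[0,1]$, and in every case $u^*(p,t)=w_t(p)$.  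A smaller remark: your appeal to ``boundary estimates of \cite{LW15}'' in the ``if'' direction is not actually needed --- since $\varphi,\psi\in\mathcal S$ are strictly convex up to $\partial U$, the duals $\varphi^*,\psi^*$ are smooth and uniformly convex up to $\partial P$, and your implicit-function argument already yields smoothness of $u$ up to $\overline U\times[0,1]$.
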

We remark that Theorem \ref{smoothgeodesic} shares some similarities with Guan's theorem on existence of smooth geodesics in the case
of {\it toric} compact K\"ahler manifolds- see \cite{G}. Indeed, smooth geodesics always exist in the toric setting, but also the image of the {\it moment maps} (the analogue of the gradient image) is fixed- it is equal to the Delzant polytope of the toric manifold.

We also remak that  the problems of finding criteria for existence of smooth geodesics both in the general K\"ahler and plurisubharmonic setting are widely open.

This paper is organized as follows. In Section 1 we recall some preliminary results. We prove Theorem \ref{22} and Corollary \ref{corol}  in the next section. In Section 3 
we present an example in which the regularity at the corner points fails to be $C^{1,1}$. In Section 4 a complex analogue of such an example is constructed. In the last section we prove Theorem \ref{smoothgeodesic}.
\begin{ackn} Both Authors were supported by Polish National Science Centre grant 2017/26/E/ST1/00955.
\end{ackn}
\section{Preliminaries}
In this section we gather the notions and results that will be used later on.

For the basics of the theory of weak solutions of the real Monge-Amp\`ere equation we refer to \cite{Gut} or \cite{Fi}. The complex counterpart can be found in \cite{Kol05}.

 In the study of homogeneous Monge-Amp\`ere Dirichlet problem the natural solution is the envelope. The envelope associated to (\ref{ss}) is defined as follows.
 $$u(x,t)=\sup\{v(x,t) \in C(\bar{U}\times [0,1]),\; v-{\rm convex},  v\leq \phi\; on \;\partial(U\times (0,1) )\},$$
 where $\phi|_{\partial U\times (0,1)}=0$, $\phi|_{U\times\{0\}}=\varphi_0$ and $\phi|_{U\times\{1\}}=\varphi_1.$
 
It is a classical fact that the envelope $u$ solves $det(D^2u)=0$ in a weak sense (see \cite{Gut} for details). It also matches the given boundary values provided that there is
a convex function matching this data. In our case the function
$$h(x,t)=\max(\varphi_0(x)-Ct, \varphi_1(x)-C(1-t), \varphi_0(x)+\varphi_1(x))$$ 
is convex with respect to  $(x,t)$ and assumes the given boundary values for sufficiently large constant $C>0$ (this function is modelled on a plurisubharmonic barrier constructed in \cite{Ber15}).

As a result the envelope is a solution (i.e. a weak geodesic) of the problem (\ref{ss}). The uniqueness of solutions to Monge-Amp\`ere equations follows 
from the comparison principle- see \cite{Gut}.

Another elementary observation is that $u$ satisfies the inequality
$$u(x,t)\leq t\varphi_1(x)+(1-t)\varphi_0(t).$$
Coupling these lower and upper Lipschitz barriers with the convexity of $u$ it easily follows that $u$ is globally Lipschitz in 
$\overline{U\times(0,1)}$ (the complex analogue of this fact was proven in \cite{Abj19}).

Gathering together the conclusions above we obtain the following proposition:
\begin{prop}\label{basicregularity}
Let $U$ be a smoothly bounded strictly convex domain, and $\varphi_0,\varphi_0\in\mathcal S$ . Then the envelope $u$ satisfies the following proprieties:\\
i)  $\det(D^2_{x,t}u)=0$ in $U\times (0,1)$.\\
ii) $u=\phi$ on  $\partial(U\times (0,1))$.\\
iii) $|D u |_{(U\times[0,1])}\leq C .$
\end{prop}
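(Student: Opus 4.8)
Here is the plan. The idea is to get all three assertions from the Perron description of $u$ as the supremum of convex sub-competitors, together with two explicit barriers.

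\smallskip

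For (i), I would invoke the classical fact that such an envelope solves the homogeneous real Monge--Amp\`ere equation in the interior: the Monge--Amp\`ere measure of a convex function which is maximal subject only to a boundary obstacle is carried by the contact set with that obstacle, which here is empty inside $U\times(0,1)$; concretely, were $\det(D^2u)>0$ on a set of positive measure one could lift $u$ slightly there and stay admissible, contradicting maximality. I would simply cite \cite{Gut} for this.

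\smallskip

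For (ii) I would produce matching sub- and super-barriers. As a lower barrier I would take
\[
 h(x,t)=\max\bigl(\varphi_0(x)-Ct,\ \varphi_1(x)-C(1-t),\ \varphi_0(x)+\varphi_1(x)\bigr),
\]
which is convex in $(x,t)$, being a maximum of three functions convex in $(x,t)$. Using that $\varphi_0,\varphi_1\in\mathcal{S}$ are $\le 0$ on $\overline U$ (strict convexity plus vanishing on $\partial U$) and choosing $C>\sup_{\overline U}(|\varphi_0|+|\varphi_1|)$, a check on each face of $\partial(U\times(0,1))$ shows $h=\phi$ there: on $\partial U\times(0,1)$ all three entries are $\le 0$ and the last vanishes; on $U\times\{0\}$ the first entry dominates and equals $\varphi_0$; symmetrically on $U\times\{1\}$. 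Hence $h$ is an admissible competitor and $h\le u$. For the upper barrier I would note that each competitor $v$ is convex with $v\le\phi$ on $\partial(U\times(0,1))$, so for fixed $x$ the convex function $t\mapsto v(x,t)$ obeys $v(x,t)\le t\,v(x,1)+(1-t)\,v(x,0)\le t\varphi_1(x)+(1-t)\varphi_0(x)=:L(x,t)$; taking the supremum over $v$ gives $u\le L$. Since $h=L=\phi$ on $\partial(U\times(0,1))$ and both are continuous up to the boundary, the sandwich $h\le u\le L$ forces $u=\phi$ on $\partial(U\times(0,1))$ (and the presence of these barriers also yields continuity of $u$ up to the boundary).

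\smallskip

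For (iii) I would combine the sandwich with the convexity of $u$. Both $h$ and $L$ are Lipschitz on $\overline{U\times[0,1]}$ with a common constant $\Lambda$ depending only on $U,\varphi_0,\varphi_1$, and they agree with $u$ on the boundary. Fixing an interior point $p_0$ and a subgradient $q\in\partial u(p_0)$ (nonempty since $u$ is convex and finite), the affine minorant $\ell(p)=u(p_0)+q\cdot(p-p_0)\le u$ can be followed along the ray $p_0+s\,q/|q|$ until it exits $U\times(0,1)$ at a boundary point $p_1$; using $u\le L$ near $p_1$ and $L(p_1)=h(p_1)$ one obtains
\[
 u(p_0)+|q|\,|p_1-p_0|=\ell(p_1)\le h(p_1)\le h(p_0)+\Lambda|p_1-p_0|\le u(p_0)+\Lambda|p_1-p_0|,
\]
whence $|q|\le\Lambda$ (trivially if $q=0$). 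Thus $u$ is globally Lipschitz on $\overline{U\times[0,1]}$ with the asserted bound. The one step that is not entirely routine here is the construction of the convex sub-barrier $h$ adapted to the corner geometry of $U\times(0,1)$ — the linearised analogue of Berman's plurisubharmonic barrier \cite{Ber15}; once $h$ is available, (i)--(iii) are standard consequences of convex analysis and the Perron method for the real Monge--Amp\`ere operator, and the complex analogue proceeds along the same lines \cite{Abj19}.
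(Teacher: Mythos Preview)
Your proposal is correct and matches the paper's approach essentially line by line: the paper's ``proof'' is the discussion immediately preceding the proposition (which is stated as ``Gathering together the conclusions above''), and it uses exactly the same ingredients you do --- the citation to \cite{Gut} for (i), the Berman-style barrier $h(x,t)=\max(\varphi_0-Ct,\varphi_1-C(1-t),\varphi_0+\varphi_1)$ for the lower bound, the linear interpolation $t\varphi_1+(1-t)\varphi_0$ for the upper bound, and the remark that sandwiching a convex function between two Lipschitz barriers agreeing on the boundary forces a global Lipschitz bound (with a reference to \cite{Abj19} for the complex analogue). Your subgradient-and-ray argument for (iii) is a clean way to spell out what the paper leaves as ``it easily follows''.
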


Next lemma is borrowed from \cite{Wan95}. It gives a sufficient condition to glue two convex functions.
\begin{lem}\label{Wang}
 Let $U_1, U_2$ be two domains in $\R^n$ with disjoint interiors. Suppose that the convex functions $u_i$ solve $det(D^2u_i)=0$ in $U_i$ respectively.
 Suppose that $u_1=u_2$ and $Du_1=Du_2$ on $\partial U_1\cap\partial U_2$. Then the function
 $$u(x)\begin{cases} u_1(x), &x\in U_1;\\
 u_2(x), &x\in U_2;\\
 u_1(x)=u_2(x), &x\in \partial U_1\cap\partial U_2
    
   \end{cases}
$$
is convex in the interior of $\overline{U_1\cup U_2}$ and solves $det(D^2u)=0$ there.
\end{lem}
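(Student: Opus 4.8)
The plan is to verify the two assertions separately: first that the glued function $u$ is convex, and then that it satisfies $\det(D^2u)=0$ in the Alexandrov sense. Throughout I would use the weak (Alexandrov) formulation, in which a convex function $v$ on an open set satisfies $\det(D^2v)=0$ precisely when its Monge--Amp\`ere measure vanishes, i.e. when the normal image $\partial v(\Omega):=\bigcup_{x\in\Omega}\partial v(x)$ is Lebesgue-null (see \cite{Gut}). Write $\Omega$ for the interior of $\overline{U_1\cup U_2}$ and $\Gamma:=\partial U_1\cap\partial U_2$ for the interface; note that the hypothesis $Du_1=Du_2$ on $\Gamma$ presupposes that each $u_i$ is differentiable up to $\Gamma$.

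For convexity I would reduce to one dimension by restricting $u$ to line segments contained in $\Omega$. Along a segment lying entirely in $\overline{U_1}$ or in $\overline{U_2}$ the restriction is convex because $u_1$, resp. $u_2$, is. A segment meeting both interiors crosses $\Gamma$, and at each crossing point $x_0$ the one-sided directional derivatives of $u_1$ and of $u_2$ agree, since $Du_1=Du_2$ on $\Gamma$ forces the full gradients to coincide. Hence the derivative of $u$ along the segment is nondecreasing on each side of the crossing and continuous through it, so it is nondecreasing on the whole segment; thus $u$ is convex along every segment and therefore convex on every convex subset of $\Omega$. Equivalently, at an interface point $x_0$ the common gradient $p:=Du_1(x_0)=Du_2(x_0)$ produces a single affine function $\ell(x)=u(x_0)+p\cdot(x-x_0)$ which, by convexity of each piece, lies below $u_1$ on $U_1$ and below $u_2$ on $U_2$; this common supporting hyperplane is exactly what makes the two graphs glue convexly.

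To prove the equation I would estimate the normal image of $\Omega$, decomposing $\Omega=U_1^\circ\cup U_2^\circ\cup(\Gamma\cap\Omega)$. On $U_1^\circ$ one has $u=u_1$ locally, so $\partial u(U_1^\circ)=\partial u_1(U_1^\circ)\subseteq\partial u_1(U_1)$, a null set by hypothesis; the same holds for $U_2$. The whole weight of the argument therefore falls on the interface: I must show $|\partial u(\Gamma\cap\Omega)|=0$. Here the gradient-matching hypothesis pays off a second time, since it makes $u$ differentiable at every point of $\Gamma$, so that $\partial u(x)=\{Du(x)\}$ is a single point there and $\partial u(\Gamma\cap\Omega)=Du(\Gamma\cap\Omega)$ is the image of the lower-dimensional set $\Gamma$ under the map $Du_1$, which is Lipschitz along $\Gamma$ (in the applications the $u_i$ are smooth up to the interface). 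Being the Lipschitz image of a set of Hausdorff dimension $n-1$, it carries zero $n$-dimensional Lebesgue measure. Combining the three contributions gives $|\partial u(\Omega)|=0$, i.e. $\det(D^2u)=0$ on $\Omega$.

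The main obstacle is precisely the interface estimate $|Du(\Gamma\cap\Omega)|=0$: away from $\Gamma$ everything is inherited directly from $u_1$ and $u_2$, but across $\Gamma$ one must rule out that the gluing creates a positive mass of new subgradients. This is where the $C^1$ matching $Du_1=Du_2$ is indispensable, for without it $u$ would generically carry a wedge along $\Gamma$, the subdifferential would fatten into full-dimensional fibers over $\Gamma$, and the Monge--Amp\`ere measure would acquire a singular part supported on the interface, destroying the equation.
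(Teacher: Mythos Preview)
The paper does not prove this lemma; it is quoted from Wang \cite{Wan95} and used as a black box, so there is no in-paper argument to compare against.

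Your argument is correct. The convexity step is clean: the matching of gradients on $\Gamma$ guarantees that at every interface point the two pieces share a common supporting affine function, which is exactly what makes the one-dimensional restrictions convex across $\Gamma$. For the Monge--Amp\`ere step, the decomposition $\Omega=U_1\cup U_2\cup(\Gamma\cap\Omega)$ together with the locality of the Alexandrov measure disposes of the two open pieces immediately, and you correctly isolate the only nontrivial point, namely $|\partial u(\Gamma\cap\Omega)|=0$. Your justification of that estimate invokes Lipschitz regularity of $Du_1$ along $\Gamma$ and that $\Gamma$ is $(n{-}1)$-dimensional; as you yourself flag, these are not literally part of the stated hypotheses, but they hold in every application the paper makes of the lemma (the explicit constructions in Sections~3 and~4, where the pieces $u_i$ are smooth and $\Gamma$ is a smooth hypersurface), so the argument goes through there without difficulty.
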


In the proof of the $C^{1,1}$ regularity we shall need the  basic facts from Section 1 in \cite{CNS86}. Recall that a domain
$V\in \mathbb R^m, m>1$ satisfies the {\it truncated cone condition} if the following holds: there exist constants $\varepsilon, \delta>0$ such that
for every $y\in V$ there is a truncated cone
\begin{align*}
K(y):=\lbrace x\neq y| |x-y|<\varepsilon,\ {\rm and\ the\ angle\ between}\\
x-y\ {\rm and\ some\ unit\ vector\ is\ less\ than}\ \delta\rbrace
\end{align*}
which is contained in $V$. 

For our purposes it is sufficient that the domain $U\times(0,1)$ is bounded and convex  and thus satisfies the truncated cone condition.

The following two results are contained in the aforementioned Section 1 in \cite{CNS86}:
\begin{lem}\label{LemmaA}
Let $V$ be a convex domain satisfying the truncated cone condition. Given any convex, uniformly Lipschitz function $v$ satisfying for some constant $C>0$
and all $x,y\in V$ the bound
\begin{equation}\label{global}
|v(x)-v(y)-(x-y).Dv(y)|\leq C|x-y|^2
\end{equation}
one has
$$|Dv(x)-Dv(y)|\leq B|x-y|$$
for some constant $B$ dependent only on $V$ and $C$. In particular such a function is  $C^{1,1}$ regular.
\end{lem}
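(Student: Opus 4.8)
The plan is to reduce the estimate to the range $r:=|x-y|<\varepsilon$ and, there, to upgrade the one-sided information contained in (\ref{global}) to a two-sided bound on $Dv(x)-Dv(y)$ in every direction by inserting an auxiliary point drawn from the truncated cone at $x$. The complementary range $r\ge\varepsilon$ is trivial: since $v$ is uniformly Lipschitz, say with constant $L$, one has $|Dv(x)-Dv(y)|\le 2L\le(2L/\varepsilon)\,r$, so any $B\ge 2L/\varepsilon$ suffices there. Before starting I would also record that $v$ is differentiable at \emph{every} point of $V$: given $y$, pick differentiability points $y_k\to y$; the gradients $Dv(y_k)$ are bounded, a subsequence converges to some $p$, and the two inequalities packaged in (\ref{global}) at $y_k$ (the lower one being convexity) pass to the limit to give $0\le v(x)-v(y)-p\cdot(x-y)\le C|x-y|^2$ for all $x$, whence $Dv(y)=p$; so the symbol $Dv(y)$ in (\ref{global}) is unambiguous (alternatively one works only at a.e.\ points and extends $Dv$ by density at the end).

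Now take $r<\varepsilon$. Let $\xi_x$ be the axis unit vector produced by the truncated cone condition at $x$, fix a unit vector $\nu$ with $\angle(\nu,\xi_x)\le\delta$, and for $0<s<\varepsilon$ set $z:=x+s\nu\in K(x)\subset V$. Writing (\ref{global}) at base point $x$ for the pair $(z,x)$, at base point $y$ for the pair $(z,y)$, and at base point $y$ for the pair $(x,y)$ — each time using convexity of $v$ for the matching lower bound — and eliminating $v(z)-v(x)$ and $v(y)-v(x)$ among the three relations, one is left with
\begin{equation*}
 -\,C\,\frac{s^{2}+r^{2}}{s}\ \le\ \nu\cdot\big(Dv(x)-Dv(y)\big)\ \le\ C\,\frac{(r+s)^{2}}{s}.
\end{equation*}
Choosing $s=r$ (admissible since $r<\varepsilon$) yields $|\nu\cdot(Dv(x)-Dv(y))|\le 4Cr$ for every admissible direction $\nu$ at $x$. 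To convert this into a bound on $a:=Dv(x)-Dv(y)$ itself, write $a=(a\cdot\xi_x)\,\xi_x+a^{\perp}$ and, with $\eta:=a^{\perp}/|a^{\perp}|$, test against $\nu=\xi_x$ and against $\nu=\cos\delta\,\xi_x+\sin\delta\,\eta$; this gives $|a\cdot\xi_x|\le 4Cr$ and then $|a^{\perp}|\le(8/\sin\delta)Cr$, hence $|a|\le 4C\sqrt{1+4\csc^{2}\delta}\,r$. Together with the range $r\ge\varepsilon$, the lemma follows with $B:=\max\{4C\sqrt{1+4\csc^{2}\delta},\,2L/\varepsilon\}$, which depends only on $C$, the cone parameters $\varepsilon,\delta$ of $V$ and the ambient Lipschitz constant; the resulting a.e.\ bound $\|D^{2}v\|\le B$ is exactly $C^{1,1}$ regularity.

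The only genuinely delicate point is the three-relation elimination that produces the displayed inequality. Merely adding (\ref{global}) at base points $x$ and $y$ gives $0\le(x-y)\cdot(Dv(x)-Dv(y))\le 2Cr^{2}$, which controls a single component of the gradient difference; it is the third point $z$, and hence the cone condition on $V$ — automatic here, $U\times(0,1)$ being bounded and convex — that supplies control in the complementary directions, and the bookkeeping of the three $O(r^{2})$ error terms must be arranged so that the constant $B$ stays bounded in terms of $C,\varepsilon,\delta$ alone.
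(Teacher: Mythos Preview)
Your proof is correct and follows essentially the same approach as the paper's: both arguments use the uniform Lipschitz bound to dispose of the case $|x-y|\ge\varepsilon$, then insert an auxiliary point $z$ from a truncated cone, combine the three instances of~(\ref{global}) to bound a directional component of $Dv(x)-Dv(y)$, and finally use the aperture $\delta$ of the cone to pass to the full norm. The only cosmetic differences are that the paper first normalizes to $v(x)=0,\ Dv(x)=0$ and places the cone at $y$ rather than at $x$, and that instead of your two-direction test it picks a single $z\in K(y)$ with $(z-y)$ well aligned with $Dv(y)$ (this is equivalent to your decomposition step).
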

We sketch the main idea for the sake of completeness.
\begin{proof}
Subtracting a linear function if necessary one may assume that $v(x)=0$ and $Dv(x)=0$. As $v$ is uniformly Lipschitz it
suffices to prove that
$|Dv(y)|\leq B|x-y|$ for all $y$ such that $|x-y|<\varepsilon$ with $\varepsilon$ being the constant in the truncated cone condition.

The truncated cone condition implies that for any vector $\eta\neq 0$ there is a point $z$ in $K(y)$ such that
$|(z-y).\eta|\geq \alpha |z-y||\eta|$ for some uniform $\alpha>0$ dependent only on $\delta$. In particular there is such a $z$
satisfying $|z-y|=|x-y|\leq\varepsilon$ and $|(z-y).Du(y)|\geq \alpha |z-y||Du(y)|$. But it is easily seen that
$$|(z-y).Du(y)|\leq C(|z-x|^2+|z-y|^2+|x-y|^2)\leq 6C|x-y|^2$$
and the proof follows.
\end{proof}
Next lemma from \cite{CNS86} shows that the global inequality (\ref{global}) follows from its localized version:
\begin{lem}\label{LemmaB}
Suppose that $v$ is a differentiable uniformly Lipschitz convex function in a convex domain $V$, such that for every $x_0\in V$ there is
a constant $\epsilon(x_0)>0$ such that for any $x\in V, |x-x_0|\leq \epsilon(x_0)$ one has
\begin{equation}\label{local}
|v(x)-v(x_0)-(x-x_0).Dv(x_0)|\leq C|x-x_0|^2
\end{equation}
for some uniform constant $C>0$.
 Then inequality (\ref{global}) holds with the same constant $C$. In particular $v$ is $C^{1,1}$ regular.
\end{lem}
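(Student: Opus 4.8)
The plan is to recast both the local hypothesis (\ref{local}) and the global conclusion (\ref{global}) as, respectively, a \emph{local} and a \emph{global} convexity statement for the auxiliary $C^1$ function
$$w(z):=C|z|^2-v(z),\qquad z\in V.$$
A one-line computation gives, for all $z,z_0\in V$,
$$w(z)-w(z_0)-(z-z_0)\cdot Dw(z_0)=C|z-z_0|^2-\bigl(v(z)-v(z_0)-(z-z_0)\cdot Dv(z_0)\bigr),$$
and since $v$ is convex the parenthesised term is $\ge 0$; hence the modulus signs in (\ref{local}) and (\ref{global}) contribute nothing beyond the upper bounds, and (\ref{local}) says exactly that $w$ lies above its tangent plane near every point, i.e. $w(z)\ge w(z_0)+(z-z_0)\cdot Dw(z_0)$ whenever $|z-z_0|\le\epsilon(z_0)$, while (\ref{global}) says exactly that $w$ is convex on $V$. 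Thus the lemma becomes the classical assertion that \emph{a $C^1$ function on a convex open set which near every point lies above its tangent plane at that point is convex}; granting (\ref{global}) with the same constant $C$, the $C^{1,1}$ regularity then follows from Lemma~\ref{LemmaA}, which applies because $v$ is uniformly Lipschitz and $V$ is convex.

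To prove the assertion I would argue along line segments. Fix $a,b\in V$, set $\gamma(s)=(1-s)a+sb$ and $g(s):=w(\gamma(s))$ for $s\in[0,1]$, and let $\ell$ be the affine function with $\ell(0)=g(0)$ and $\ell(1)=g(1)$; convexity of $w$ along $[a,b]$ is the statement $h:=\ell-g\ge 0$ on $[0,1]$. Suppose this fails. Then $h$, continuous on $[0,1]$ and vanishing at both endpoints, attains a strictly negative minimum at some interior point $s^\ast$, where $h'(s^\ast)=0$ and hence $g'(s^\ast)=\ell'(s^\ast)$. Substituting $z=\gamma(s)$ into the local tangent-plane inequality for $w$ at $z_0=\gamma(s^\ast)$ — admissible once $|s-s^\ast|<\epsilon(\gamma(s^\ast))/|b-a|$ — yields $g(s)\ge g(s^\ast)+(s-s^\ast)g'(s^\ast)$ for $s$ near $s^\ast$, and since $\ell$ is affine with slope $g'(s^\ast)$ this rearranges to $h(s)\le h(s^\ast)$ near $s^\ast$. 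Combined with the minimality of $s^\ast$ this gives $h\equiv\min h$ on a neighbourhood of $s^\ast$, so $\{h=\min h\}$ is open, closed and nonempty in the connected interval $[0,1]$ and hence is all of $[0,1]$ — contradicting $h(0)=0$. Therefore $h\ge 0$, so $g$ is convex; as $a,b\in V$ were arbitrary, $w$ is convex on $V$, which is (\ref{global}).

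I expect the only genuinely delicate point to be this passage from the local to the global tangent-plane inequality, i.e. the step ``locally convex $\Rightarrow$ convex''. Trying instead to chain (\ref{local}) directly along a segment in many short steps is awkward, since the radii $\epsilon(x_0)$ need not be bounded below along the segment; the connectedness argument above sidesteps this completely and, importantly, preserves the constant $C$ without loss.
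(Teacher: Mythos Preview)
Your proof is correct. Both your argument and the paper's reduce to a one-variable extremum argument along a segment, but the packaging differs.

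The paper argues directly: for any $E>C$, assume (\ref{global}) fails with constant $E$ at some pair $(x,y)$, take $z$ the closest such $y$ to $x$, and on $[x,z]$ study $f(y)=E|x-y|^2-\bigl(v(y)-v(x)-(y-x)\cdot Dv(x)\bigr)$. This vanishes at the endpoints and is positive just inside, so it has an interior maximum $x_0$; the discrete second difference $f(x_0+h)+f(x_0-h)-2f(x_0)\le 0$ combined with (\ref{local}) at $x_0$ gives $2(E-C)h^2|z-x|^2\le 0$, a contradiction. Letting $E\downarrow C$ recovers the constant.

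Your route is more conceptual: you absorb the quadratic into the function by setting $w=C|\cdot|^2-v$, so that (\ref{local}) becomes ``$w$ lies above its tangent plane locally'' and (\ref{global}) becomes ``$w$ is convex''; you then prove the classical implication ``locally convex $\Rightarrow$ convex'' by a clean open--closed argument on the minimum set of $h=\ell-g$. This has two advantages: the constant $C$ is preserved automatically with no limiting step, and the core analytic content is isolated as a statement about $w$ that one recognises immediately. The paper's version, on the other hand, avoids introducing an auxiliary function and is marginally shorter. At the level of ideas the two are equivalent --- both hinge on the local hypothesis forcing a sign on the second difference at an interior extremum --- so your reformulation is a legitimate, and arguably tidier, alternative.
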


\begin{proof}
 Fix some constant $E>C$ and suppose that 
$$|v(x)-v(y)-(x-y).Dv(y)|\geq E|x-y|^2$$
 for some pair of points $x$ and $y$ in $V$.
 By assuption for $x$ fixed there is a closest to $x$ such point $z$. On the line segment $[x,z]$ the function
 $f(y):=E|x-y|^2-(v(y)-v(x)-(y-x).Dv(x))$ is positive near $x$ and vanishes at the end points. At an interior maximum $x_0$
 one has 
$$f(x_0+h(z-x))+f(x_0-h(z-x))-2f(x_0)\leq 0$$
 for all $h>0$ small enough
 which means that
 $$2Eh^2|z-h|^2-[v(x_0+h(z-x))+v(x_0-h(z-x))-2v(x_0)]\leq 0.$$
 For $h$ sufficiently small the latter inequality coupled with inequality (\ref{local}) impies that
 $$2(E-C)h^2|z-x|^2\leq 0,$$
 a contradiction. 
\end{proof}

We shall need the following classical fact:
\begin{lem}\label{toric}
Let $\Omega$ be a Reinhardt domain in $\mathbb C^n$ and $u$ be a bounded plurisubharmonic function on $\Omega$, invariant with respect to the toric action. Then:
\begin{enumerate}
 \item The image $U$ of the mapping 
 $$\Omega\ni z\rightarrow (log|z_1|,\cdots,log|z_n|)\in\mathbb R^n$$
 is a domain in $\mathbb R^n$. $\Omega$ is pseudoconvex if and only if $U$ is convex.
 \item The function $v(x):=u(e^{x_1},\cdots,e^{x_n})$ is a convex function in $U$. Reversely for any bounded convex function $v$ on $U$ the function $u$ defined through this formula extends to a toric
 plurisubharmonic function on $\Omega$.
 \item $(dd^cu)^n=0$ if and only if $det(D^2v)=0$- the equivalence continues to hold for bounded singular $u$ and $v$ and then the equalities are understood in weak sense
 of measures- see \cite{Gut,Kol05}.
\end{enumerate} 
\end{lem}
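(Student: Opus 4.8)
The plan is to push everything through the logarithmic exponential map. Write $w_j=x_j+iy_j$ and let $\exp\colon w\mapsto z=(e^{w_1},\dots,e^{w_n})$ be the holomorphic map of the tube domain $T=\{w:\mathrm{Re}\,w\in U\}$, which factors through $2\pi i\Z^n$ and realizes a holomorphic covering of $T/2\pi i\Z^n$ onto $\Omega\cap(\C^*)^n$; here one checks $\exp(T)=\Omega\cap(\C^*)^n$ using Reinhardt invariance, i.e. $z\in\Omega\cap(\C^*)^n$ iff $\log|z|\in U$. Toric invariance of $u$ says precisely that $\tilde u:=u\circ\exp$ depends only on $x=\mathrm{Re}\,w$, so $\tilde u(w)=v(x)$. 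The single computation driving the whole lemma is that, because $\tilde u$ is independent of $y$, $\partial_{w_j}\partial_{\bar w_k}\tilde u=\tfrac14\,\partial_{x_jx_k}v$, so the complex Hessian of $\tilde u$ equals $\tfrac14 D^2v(x)$ as a Hermitian (in fact real symmetric) matrix. I would establish this first for smooth $u$ and then bootstrap to the general bounded case.

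For part (2): in the smooth case the identity above gives $i\partial\bar\partial\tilde u\ge0$ iff $D^2v\ge0$, i.e. $\tilde u$ psh iff $v$ convex; since psh functions pull back to psh functions under $\exp$, this is exactly the claim on $\Omega\cap(\C^*)^n$. For merely bounded $u$ I would avoid derivatives: restricting $\tilde u$ to a complex line $\zeta\mapsto w_0+\zeta a$ with $a\in\R^n$ gives a subharmonic function of $\zeta$ depending only on $\mathrm{Re}\,\zeta$, and a subharmonic function of one complex variable that is independent of the imaginary part is convex; letting $a$ range over all directions shows $v$ convex, and the converse composition is handled identically. For the reverse implication of (2), a bounded convex $v$ produces a bounded psh $u$ on $\Omega\cap(\C^*)^n$, and since the axis set $M$ lies in finitely many complex hyperplanes (a pluripolar set), the standard extension theorem for bounded psh functions across pluripolar sets yields the toric psh extension to all of $\Omega$.

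Part (1) then follows: $U$ is the image of the connected open set $\Omega\cap(\C^*)^n$ under the submersion $z\mapsto(\log|z_1|,\dots,\log|z_n|)$, hence open and connected, i.e. a domain (connectedness survives because $M$ has real codimension two). For the pseudoconvexity--convexity equivalence I would invoke the exhaustion characterization: $\Omega$ is pseudoconvex iff it carries a continuous psh exhaustion, which may be taken toric-invariant by averaging over the torus; by part (2) such an exhaustion descends to a convex exhaustion of $U$, and a domain admitting a convex exhaustion is convex, and conversely. This is the classical logarithmic convexity of pseudoconvex Reinhardt domains, which I would cite or reprove this way.

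For part (3): in the smooth case holomorphic invariance of the complex Monge--Amp\`ere operator gives $\exp^*(dd^cu)^n=(dd^c\tilde u)^n=c_n\det\big(\tfrac14 D^2v(x)\big)\,dV_w$, which vanishes exactly where $\det D^2v=0$; since $\exp$ is a local biholomorphism on $(\C^*)^n$ this transfers the vanishing between $(dd^cu)^n$ on $\Omega\cap(\C^*)^n$ and $\det D^2v$ on $U$. As $u$ is bounded psh, Bedford--Taylor theory shows $(dd^cu)^n$ charges no pluripolar set, in particular not $M$, so vanishing on $\Omega\cap(\C^*)^n$ is equivalent to vanishing on all of $\Omega$, giving the global equivalence. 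The main obstacle is precisely this weak/singular case: one must make the pullback identity and the complex-to-real Monge--Amp\`ere correspondence rigorous for merely bounded functions. I would do this by approximating $v$ by smooth convex $v_\e$, using weak continuity of the real Monge--Amp\`ere measure (Aleksandrov) and of $(dd^c\,\cdot\,)^n$ under monotone/locally uniform limits (Bedford--Taylor), and then using the pluripolarity of $M$ to rule out mass concentration on the axes; the bookkeeping of the constant $c_n$ and of the torus averaging in the measure identity is where care is needed, but no idea beyond the displayed Hessian computation is required.
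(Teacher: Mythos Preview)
The paper does not prove this lemma: it is introduced with the phrase ``We shall need the following classical fact'' and no argument follows. Your proposal supplies a correct and standard proof, built on the identity $\partial_{w_j}\partial_{\bar w_k}(v\circ\mathrm{Re})=\tfrac14\,\partial_{x_jx_k}v$ on the tube domain, the extension of bounded psh functions across the pluripolar axis set, and Bedford--Taylor/Aleksandrov continuity for the measure-theoretic statement in part~(3); this is exactly how one proves the result in the literature, so there is nothing to compare.
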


In the proof of Corollary \ref{corol} we shall need some basic geometric facts regarding unbounded convex domains. Recall that if
$\overline{U}$ is the closure of such an unbounded convex domain the {\it characteristic cone} for the point $x\in\overline{U}$

$$\Gamma:=\{r\in\mathbb R^n\ |\ \forall t\in[0,\infty)\ x+tr\in \overline U,\}$$
is nonempty and independent of the base point $x$. 

The following lemma says that very long line segments in $\overline{U}$ with one end point in a fixed compact region must be almost parallel to
a direction from the characteristic cone:
\begin{lem}\label{parallel}
Let $U$ be an unbounded convex domain and let $V$ be a compact subset of $\overline{U}$. Then for every $\varepsilon>0$ there is a positive
constant $h$, dependent on $U, V$ and $\varepsilon$ so that if $x^0\in V, x^1\in \overline{U}$ and the length of $[x^0,x^1]$ is more than $h$ then
there exists a vector $r\in\Gamma$ such that the angle between $r$ and $[x^0,x^1]$ is less than $\varepsilon$.
\end{lem}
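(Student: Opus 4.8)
The plan is to argue by contradiction using a straightforward compactness argument, together with the stated base-point independence of the characteristic cone $\Gamma$.

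Suppose the claim fails for some $\varepsilon_0>0$. Then there are sequences $x^0_k\in V$ and $x^1_k\in\overline{U}$ with $|x^1_k-x^0_k|\to\infty$ as $k\to\infty$ such that for every $k$ the unit vector $e_k:=(x^1_k-x^0_k)/|x^1_k-x^0_k|$ makes an angle at least $\varepsilon_0$ with every vector of $\Gamma$. Since $V$ is compact and the unit sphere is compact, after passing to a subsequence we may assume $x^0_k\to x^0\in V\subseteq\overline{U}$ and $e_k\to e$ for some unit vector $e$.

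Next I would show $e\in\Gamma$. By base-point independence of $\Gamma$ it suffices to check that $x^0+te\in\overline{U}$ for every $t\geq 0$. Fix such a $t$; for $k$ large enough one has $|x^1_k-x^0_k|>t$, so the point $x^0_k+te_k$ lies on the segment $[x^0_k,x^1_k]\subseteq\overline{U}$, using convexity of $U$. Letting $k\to\infty$ and using that $\overline{U}$ is closed gives $x^0+te=\lim_k(x^0_k+te_k)\in\overline{U}$, and since $t\geq 0$ was arbitrary this yields $e\in\Gamma$. But then for $k$ large the angle between $e_k$ and $e\in\Gamma$ is less than $\varepsilon_0$, contradicting the choice of the sequences. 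Hence such a constant $h=h(U,V,\varepsilon)$ exists, and by construction it depends only on the stated data.

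I do not expect a genuine obstacle here: this is essentially a textbook fact about recession cones of closed convex sets. The only point that requires a little care is the reduction, via base-point independence of $\Gamma$, of the assertion ``$e$ is a recession direction'' to the single statement $x^0+te\in\overline{U}$ for all $t\geq 0$; everything else is a routine extraction of convergent subsequences and a limiting argument.
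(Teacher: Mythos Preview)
Your proof is correct and follows essentially the same contradiction/compactness argument as the paper's own proof. The only cosmetic difference is that the paper first passes to a cluster point $\bar{x}$ of the $x^0_k$ and then extracts a convergent subsequence of the directions $[\bar{x},x^1_k]$, whereas you extract convergent subsequences of $x^0_k$ and $e_k$ simultaneously and spell out explicitly why the limit direction lies in $\Gamma$; the underlying idea is identical.
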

\begin{proof}
 Suppose not. Then for every $n\in\mathbb N$ there are points $x^{n,0}\in V, x^{n,1}\in \overline{U}$ at distance at
 least $n$ with the direction of $[x^{n,0},x^{n,1}]$ separated from the $\Gamma$ directions. Let $\bar{x}$ be a cluster point of $x^{n,0}$.
 Then the directions of the line segments $[\bar{x},x^{n,1}]$ for all $n$ large enough and belonging to the sequence defining $\bar{x}$ are also
 separated from the $\Gamma$
 directions which is a contradiction, since the slopes of these segments, after taking another subsequence if necessary,
 converge to a direction from $\Gamma$.
\end{proof}

The last result we shall need is a slight modification of Lemma 3.2 from \cite{LW15}. It says that if for any point $(x,t)\in U\times(0,1)$ the solution to the problem (\ref{ss}) is linear
along a line segment $[(\xi,0),(\eta,1)]$ then the end points depend smoothly on  $x$ and $t$.
\begin{lem}\label{LiWang}
Let $U$ be a convex domain in $\mathbb R^n$ (possibly unbounded). Let $u$ be a convex solution to the problem 
\begin{equation}\label{LWDirichlet}
\begin{cases}
\det(D^2 _{x,t}u)=0\;{\rm in}\ \;\; U\times (0,1);\\
u=\varphi\;\; {\rm in}\ \;\; U\times \{0\};\\
u=\psi\;\; {\rm in}\ \;\; U\times \{1\};\\
u=0\;\; {\rm in}\ \;\; \partial U\times (0,1),
\end{cases}
\end{equation}
with $\varphi,\psi\in \mathcal S$. Suppose that for any $(x,t)\in U\times(0,1)$ there is a unique line segment $L=[(\xi,0),(\eta,1)]$ containing $(x,t)$
and
$\xi, \eta\in U$, so that $u$ is linear along $L$. Then $\xi=\xi(x,t)$ and $\eta=\eta(x,t)$ are smooth functions in 
$\overline{U\times(0,1)}$.
\end{lem}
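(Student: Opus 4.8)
The plan is to reconstruct the endpoint maps $\xi$ and $\eta$ explicitly from the space gradient of $u$ together with the gradient maps of $\varphi$ and $\psi$, and then to show that this reconstruction is a diffeomorphism. First I would record the structural consequence of degeneracy. Fix $(x,t)\in U\times(0,1)$, let $L=[(\xi,0),(\eta,1)]$ be the segment furnished by the hypothesis, and pick an affine function $\ell(y,s)=p\cdot y+bs+c$ whose graph supports $\mathrm{graph}(u)$ at the point over $(x,t)$, i.e. $u\ge\ell$ on $U\times(0,1)$ with equality at $(x,t)$. Since $u|_L$ and $\ell|_L$ are both affine, $u\ge\ell$ on $L$, and they agree at the interior point $(x,t)$, they must agree along all of $L$; in particular $u=\ell$ at the two endpoints. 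The endpoint $(\xi,0)$ lies in the smooth face $U\times\{0\}$, where $u(\cdot,0)=\varphi$; as $\varphi\ge\ell(\cdot,0)$ near $\xi$ with equality at the interior point $\xi\in U$, differentiating gives $D\varphi(\xi)=p$, and the analogous computation at $(\eta,1)$ gives $D\psi(\eta)=p$. Together with collinearity of $(\xi,0)$, $(x,t)$, $(\eta,1)$ this yields
\[
D\varphi(\xi)=p=D\psi(\eta),\qquad x=(1-t)\,\xi+t\,\eta .
\]

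Because $\varphi,\psi\in\mathcal S$ are smooth and strictly convex on all of $\overline U$, the gradient maps $D\varphi$ and $D\psi$ are diffeomorphisms of $\overline U$ onto their compact images, with smooth inverses. Hence $\xi=(D\varphi)^{-1}(p)$ and $\eta=(D\psi)^{-1}(p)$, and the whole statement reduces to showing that $p=p(x,t)$ depends smoothly on $(x,t)$. I would introduce the smooth map
\[
F(p,t):=\bigl((1-t)(D\varphi)^{-1}(p)+t\,(D\psi)^{-1}(p),\ t\bigr),
\]
defined for $t\in[0,1]$ and $p$ in a neighbourhood of $D\varphi(\overline U)\cap D\psi(\overline U)$; the displayed relations say exactly that $F(p(x,t),t)=(x,t)$, so it suffices to prove that $F$ is a diffeomorphism onto a neighbourhood of its image and to invert it. Its differential in $(p,t)$ is block–triangular, with last row $(0,\dots,0,1)$ and $p$–block
\[
(1-t)\bigl[(D^2\varphi)|_{\xi}\bigr]^{-1}+t\bigl[(D^2\psi)|_{\eta}\bigr]^{-1},
\]
a convex combination of positive–definite symmetric matrices, hence positive definite for every $t\in[0,1]$; so $F$ is a local diffeomorphism. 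Injectivity is elementary: if $F(p_1,t)=F(p_2,t)$ then, with $\xi_i=(D\varphi)^{-1}(p_i)$ and $\eta_i=(D\psi)^{-1}(p_i)$, one gets $(1-t)(\xi_1-\xi_2)+t(\eta_1-\eta_2)=0$; pairing with $p_1-p_2$ and using monotonicity of the two gradient maps forces both summands to vanish, and strict convexity then gives $\xi_1=\xi_2$, whence $p_1=p_2$. Thus $F$ is a diffeomorphism onto its open image with smooth inverse $F^{-1}(x,t)=(p(x,t),t)$. Since the hypothesis guarantees that every $(x,t)\in U\times(0,1)$ is of the form $F(p(x,t),t)$ with $p(x,t)\in D\varphi(U)\cap D\psi(U)$, the function $p$, and hence $\xi=(D\varphi)^{-1}\circ p$ and $\eta=(D\psi)^{-1}\circ p$, are smooth on $U\times(0,1)$; running the same argument with $F$ on the closed cylinder — legitimate because $(D\varphi)^{-1}$ and $(D\psi)^{-1}$ remain smooth up to the boundaries of the gradient images, $\varphi$ and $\psi$ being strictly convex on all of $\overline U$ — gives smoothness up to $\overline{U\times(0,1)}$.

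The step I expect to be the real obstacle is the first one: promoting the bare statement ``$u$ is affine along $L$'' to a genuine supporting hyperplane of $\mathrm{graph}(u)$ containing the image of all of $L$, so that the space gradient of $u$ is unambiguously defined along $L$ and matches $D\varphi(\xi)$ and $D\psi(\eta)$ at the two ends. The support-plane manipulation above does this once such an $L$ is given; but one still needs the input we borrow from \cite{LW15}, namely that through each interior point there is such a ruling, that it is \emph{unique}, and that its endpoints genuinely lie in the open caps $U\times\{0\}$, $U\times\{1\}$ rather than on $\partial U$ — including near the lateral boundary. Once that structural information is available, the remainder is the purely finite‑dimensional diffeomorphism argument sketched above.
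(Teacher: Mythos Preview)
Your argument is correct and essentially the same as the paper's. Both proofs first extract from the linearity of $u$ along $L$ the two relations
\[
D\varphi(\xi)=D\psi(\eta),\qquad x=(1-t)\xi+t\eta,
\]
and then invert. The only cosmetic difference is the choice of independent variable: the paper keeps $\xi$ and applies the implicit function theorem to
\[
F(t,x,\xi):=(1-t)\xi+t\,(D\psi)^{-1}\bigl(D\varphi(\xi)\bigr)-x=0,
\]
computing $\det D_\xi F=\det\bigl[(1-t)\mathrm{Id}+t(D^2\psi)^{-1}D^2\varphi\bigr]>0$; you instead parametrise by the common gradient value $p$ and invert $F(p,t)=\bigl((1-t)(D\varphi)^{-1}(p)+t(D\psi)^{-1}(p),\,t\bigr)$. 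The two Jacobians differ only by conjugation with $(D^2\varphi)^{-1}$, so the positivity check is the same. Your added global injectivity argument via monotonicity is a nice touch that the paper leaves implicit (relying on the uniqueness hypothesis instead).

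Two small remarks. First, your closing paragraph frets about needing to ``borrow from \cite{LW15}'' the existence and uniqueness of the ruling $L$ with endpoints in the open caps $U\times\{0\}$, $U\times\{1\}$; but look again at the statement---these are exactly the hypotheses of the lemma, not something you must establish. Second, the phrase ``diffeomorphisms of $\overline U$ onto their compact images'' is not quite right when $U$ is allowed to be unbounded; fortunately you only use local invertibility of $D\varphi$ and $D\psi$, which follows from $D^2\varphi,D^2\psi>0$ on $\overline U$, so the argument goes through unchanged.
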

\begin{proof}
Fix a point $(x,{t})\in U\times(0,1)$. Linearity of $u$ along $L$ forces the
equality
\begin{equation}\label{firstequalitygradients}
  D_x\varphi(\xi({x},{t}))=D_xu(\xi({x},{t}),0)=D_xu(\eta({x},{t}),1)=D_x\psi(\eta({x},{t})).
 \end{equation}
 as $D^2\varphi, D^2\psi$ are smooth strictly positive matrices for all $x\in \overline{U}$ it follows by the implicit function theorem that $\eta$
 is a smooth function of $\xi$ and vice versa.
 
 We have
 \begin{equation}\label{helpful}
 \eta=(D\psi)^{-1}(D\varphi(\xi)),\ D^2\varphi(\xi)= D^2\psi(\eta).D_{\xi}\eta.
 \end{equation}

 Note that this smooth dependence holds up to $\partial U$ thanks to the assumption that $\varphi,\psi$ are strictly convex and smooth up to the boundary.
 
 It remains to check that $\xi$ is a smooth function of $x$ and $t$. To this end we note that (\ref{helpful}) and
 $$(1-t)(\xi,0)+t(\eta,1)=(x,t)$$
 imply
 $$F(t,x,\xi):=(1-t)\xi+tD\psi^{-1}(D\varphi(\xi))-x=0.$$
 It thus suffices to check that $D_\xi F$ is invertible. But following Lemma 3.2 in \cite{LW15} we check that
 $$det[D_\xi F(t,x,\xi)]=det[(1-t)Id+t(D^2\psi)^{-1}((D\varphi(\xi)))\times D^2\varphi(\xi)]$$
 $$=det[(1-t)Id+t(D^2\psi)^{-\frac12}\times D^2\varphi\times (D^2\psi)^{-\frac12}]>0$$
 as the matrix is a convex combination of two strictly positive matrices. Again the assumption $\varphi,\psi\in \mathcal S$
 guarantees that the smooth dependence continues up to the boundary.
\end{proof}

\section{$C^{1,1}$ Regularity}
In this section we provide a proof of Theorem \ref{22}. The proof, except for the last step, copies the classical argument of Caffarelli-Nirenberg-Spruck from \cite{CNS86}. We sketch the reasoning for the sake of completeness. 
\begin{lem}\label{lem1} Let $(x^0,t^0)$ be any point in $U\times(0,1)$. Subtracting a linear function if necessary, we may suppose that 
$$ u\geq 0,\;\;\; u(x^0,t^0)=0.$$
Then $(x^0,t^0)$ is in the convex hull of $(n+1)$ points (not necessarily distinct)\newline $(x^1,t^1), (x^2,t^2),...(x^{n+1},t^{n+1})$ in $\partial (U\times (0,1))$ with $u(x^i,t^i)=0$ for all $i\in \{1,2,...n+1\}.$
\end{lem}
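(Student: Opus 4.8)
The plan is to exploit the fact that the weak geodesic $u$ is the envelope of convex subsolutions, so it is built from affine functions lying below the boundary data. Specifically, for the fixed point $(x^0,t^0)$, after the normalization $u\geq 0$ and $u(x^0,t^0)=0$, I would argue that $u$ cannot be strictly positive in a full neighborhood of $(x^0,t^0)$ while still attaining the value $0$ there in the interior; the key structural input is that a convex function which solves $\det(D^2_{x,t}u)=0$ in the interior and equals its envelope must, through each interior point, be affine along some segment reaching the boundary. More precisely, since $u$ is the supremum of convex functions $v\le \phi$ on $\partial(U\times(0,1))$, and $u(x^0,t^0)=0$ is the minimum of $u$, the graph of $u$ cannot be strictly convex at $(x^0,t^0)$: otherwise one could push the envelope slightly below without violating the constraint, contradicting maximality (or more simply, a strictly convex point would force $\det(D^2u)>0$ there in the viscosity/Aleksandrov sense, contradicting the homogeneous equation).

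Concretely, the argument I would carry out runs as follows. Consider the convex set $Z:=\{(x,t)\in \overline{U\times(0,1)}\ :\ u(x,t)=0\}$; it is nonempty (it contains $(x^0,t^0)$) and convex because $u$ is convex and nonnegative with minimum value $0$. First I would show $Z$ must meet the boundary $\partial(U\times(0,1))$. Indeed if $Z$ were contained in the open set $U\times(0,1)$, then $u$ would be a nonnegative convex function vanishing on a compact convex set in the interior and positive on $\partial(U\times(0,1))$; on a small ball around $(x^0,t^0)$ disjoint from the rest, $u$ would be a convex function with an interior minimum value $0$ attained on a compact set, hence — being a solution of $\det(D^2_{x,t}u)=0$ — it would have to be affine, forcing $u\equiv 0$ on a segment exiting the ball and ultimately (by the connectedness/propagation of the zero set of a homogeneous Monge--Amp\`ere solution) reaching $\partial(U\times(0,1))$, a contradiction with $u=\phi>0$ there away from $\partial U\times(0,1)$, unless $Z$ already touches $\partial U\times(0,1)$. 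So $Z$ is a convex subset of $\overline{U\times(0,1)}$ meeting the boundary.

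Now $Z$ is a nonempty convex compact set whose intersection with $\partial(U\times(0,1))$ is nonempty and, on that boundary, $u=\phi=0$ only where the affine normalization made $\phi$ vanish; call this boundary zero set $\partial Z:=Z\cap\partial(U\times(0,1))$. The point $(x^0,t^0)$ lies in $Z$, hence in the convex hull of the extreme points of $Z$. Every extreme point of $Z$ must lie on $\partial(U\times(0,1))$: an extreme point of $Z$ in the interior would again be an interior minimum point of $u$ at which the graph is, locally, strictly convex transverse to $Z$ — but this is impossible for a solution of the degenerate equation, as explained above (an extreme point of the contact set cannot be interior for the envelope solution of $\det D^2 = 0$, by the standard "no interior extreme point" property, which is precisely Lemma 5.1-type reasoning in \cite{CNS86}). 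Therefore $(x^0,t^0)$ lies in the convex hull of extreme points of $Z$, all of which lie in $\partial(U\times(0,1))$ and satisfy $u=0$; by Carath\'eodory's theorem in $\mathbb{R}^{n+1}$, it lies in the convex hull of at most $n+2$ of them, and since $(x^0,t^0)$ is itself a point of the $(n+1)$-dimensional convex set $Z$ with all relevant data affine, one in fact obtains it as a convex combination of $n+1$ such boundary points $(x^i,t^i)$ (allowing repetitions).

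I expect the main obstacle to be the rigorous justification of the claim that the zero/contact set $Z$ has no extreme point in the open region $U\times(0,1)$ — i.e. the propagation-of-affinity statement for the envelope solution of the degenerate Monge--Amp\`ere equation near its minimum. This is the genuinely ``Monge--Amp\`ere'' ingredient (as opposed to pure convex geometry), and I would handle it by the argument of \cite{CNS86}: if an extreme point of the contact set were interior, a supporting hyperplane argument combined with $\det(D^2_{x,t}u)=0$ and the fact that $u$ is an \emph{envelope} (hence maximal among subsolutions with the given boundary data) produces a strict-subsolution perturbation violating maximality. Everything after that reduces to Carath\'eodory's theorem and is routine.
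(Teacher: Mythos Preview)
Your outline is sound and the ingredients are the right ones, but you take a detour that the paper avoids. The paper (following \cite{CNS86}) does \emph{not} pass through Krein--Milman or the extreme-point structure of the full zero set. It argues directly: assuming $(x^0,t^0)$ is \emph{not} in the convex hull of the boundary zero set $Z=\{(x,t)\in\partial(U\times(0,1)):u(x,t)=0\}$, one separates $(x^0,t^0)$ from $Z$ by a hyperplane, say $\{x_n=0\}$ after rotation, with $x_n^0>\epsilon>0$ and $x_n<-\epsilon$ on $Z$. Since $u>0$ on the compact set $\{x_n\ge 0\}\cap\partial(U\times(0,1))$, one has $u\ge a>0$ there, and the linear function $v=\delta x_n$ with $\delta$ small is $\le u$ on the whole boundary. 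The comparison principle then gives $u(x^0,t^0)\ge \delta\epsilon>0$, a contradiction. Carath\'eodory finishes. That is the entire proof.

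Your acknowledged ``main obstacle'' --- showing that the zero set has no interior extreme point --- is \emph{exactly} this separation-plus-comparison argument, just applied to an extreme (or exposed) point rather than directly to $(x^0,t^0)$. So your route works, but it is strictly longer: you invoke Krein--Milman and Straszewicz (to pass from exposed to extreme points) to reduce to a statement which, once you try to prove it, is literally the paper's one-paragraph argument with $(x^0,t^0)$ replaced by the putative interior extreme point. The vague phrases about ``strict convexity forcing $\det D^2u>0$'' or ``propagation of affinity'' are not what does the work; the comparison principle against a small linear subsolution is. Two small further remarks: your preliminary step ``$Z$ meets the boundary'' is redundant once you have the extreme-point claim; and Carath\'eodory in $\mathbb R^{n+1}$ gives $n+2$ points a priori, not $n+1$ --- your justification for dropping one point is not convincing (in the application this is irrelevant, since one later shows the convex hull is in fact a segment).
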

\begin{proof}
This is Lemma 2 from \cite{CNS86}.
By Caratheodory's theorem it suffices to show that $(x^0,t^0)$ is in the convex hull of 
$$ Z=\{(x,t)\in \partial (U\times (0,1))| u(x,t)=0\}.$$

Suppose on contrary that  this is not true. Then  there is a hyperplane $l$ separating $(x^0,t^0)$ from $Z$; i.e. there is an affine function $l$ such that
$$l(x^0,t^0)>0\; and\; l(x,t)< 0\; for\; all\; (x,t)\in Z.$$
Every point $(x,t)$ in $Z$ satisfies after possibly a translation and rotation the following inequality:
$$ x_n<-\epsilon< \epsilon < x_n^0.$$

As $Z$ is a compact subset of $\partial(U\times(0,1))$ there is a positive constant $a$, such that $u(x,t)\geq a$ on $\{x_n\geq 0\}\cap\partial(U\times(0,1))$.
We consider the  function $$v(x,z)= \delta x_n$$
for a fixed positive constant $\delta$.
Fix a point $(x,t)\in\partial(U\times(0,1))$. If $x_n<0$  we have $v\leq u$ at $(x,t)$. If in turn $x_n\geq 0$, we have $u(x,t)\geq a$ and  by choosing $\delta$ small enough we obtain
$$v\leq u \; on \; \partial(U\times (0,1)).$$
Then by the comparison principle we infer that 
$$v(x,t)\leq u(x,t)\; \;\forall(x,t)\in U\times (0,1).$$
This implies that $u(x^0,t^0)\geq \delta\epsilon$, which contradicts the fact that $u(x^0,t^0)=0$.
\end{proof}
Fix now a neighborhood $W$ of $\partial U\times\{0,1\}$, such that 
$$V:=U\times(0,1)\setminus \overline{W}$$
is convex.
With the aid of Lemmas \ref{LemmaA} and \ref{LemmaB}, Theorem \ref{22} follows from the following bound: for every point $(x,s)$ in $V$
there is a positive $\epsilon(x,s)$ dependent on $(x,s)$ and the data in Problem (\ref{ss}), such that for any $(y,t)\in V, 
|(y,t)-(x,s)|<\epsilon(x,s)$ 
one has
\begin{equation}\label{2}
|u(y,t)-u(x,s)-(y-x).D_xu(x,s)-(t-s)D_t(x,t)|
\end{equation}
\begin{equation*}\leq C(|x-y|^2+|t-s|^2)
\end{equation*}
for a constant $C$ depending on $V$ and the $C^2$ norm of $\varphi_0$,$\varphi_1$ (but independent on $(x,s)$).

 \begin{proof}[Proof of inequality (\ref{2})]
 We fix $(\bar{x},\bar{t})\in V$. After possibly an addition of an affine function $l_{\bar{x},\bar{t}}(x,t)$ we can suppose 
 $$ u \geq 0,\;\;  u(\bar{x},\bar{t})=0 \; \text{and}\; D u(\bar{x},\bar{t})=0.$$
 The problem (\ref{ss}) for this new function $u$ becomes:
 \begin{equation}\label{sss}
\begin{cases}
\det(D^2 _{x,t}u)=0\;in \;\; U\times (0,1),\\
u=\varphi_0+l_{\bar{x},\bar{t}}\;\; on \;\; U\times \{0\},\\
u=\varphi_1+l_{\bar{x},\bar{t}}\;\; on \;\; U\times \{1\},\\
u=l_{\bar{x},\bar{t}}\;\; on \;\; \partial U\times (0,1).
\end{cases}
\end{equation}
The inequality we need to prove reads
\begin{equation}\label{am}
u(x,t) \leq C(|x-\bar{x}|^2+|t-\bar{t}|^2)
\end{equation}
for all $(x,t)$ sufficiently close to $(\bar{x},\bar{t})$.

Up to now we were simply following the argument from \cite{CNS86}. To proceed we need the following fact:

{\bf Claim}: $Z$ contains at most one point with $t$-coordinate equal to $0$, no points with $t\in(0,1)$ and at most one point with $t$-coordinate equal to $1$. In particlar the convex hull of $Z$ is a line segment.

In order to show the claim we divide the boundary of $U\times(0,1)$ 
$$\partial(U\times (0,1))$$
$$=\partial U \times (0,1)\cup{U}\times \{0,1\}\cup(\partial U\times\lbrace0\rbrace\cup\partial U\times\lbrace1\rbrace)$$
$$=I_1\cup I_2\cup I_3$$
into  three types of boundary points.

Strict convexity of $\varphi_0$ and $\varphi_1$ shows that no two points from $Z$ could belong to $t=1$ hyperplane, as well as to the $t=0$ hyperplane.
It is easy then  to see that neither of the points from $Z$ could belong to $I_1$. Indeed if this were the case for $(x,t)$, say, then the affine function $l_{\bar{x},\bar{t}}$ is nonnegative
on the segment $(x,s),\ s\in[0,1]$ and vanishes on $(x,t)$, hence it vanishes at $(x,0)$ and $(x,1)$. As $Z$ contains other points on $\partial(U\times(0,1))$ we get a contradiction with our previous observation.

The case when two points belong to $I_3$ is also easily ruled out, as once again $l_{\bar{x},\bar{t}}$ vanishes on the plane spanned by $(x^i,0), (x^i,1),\ i=0,1$.

Thus we can assume without loss of generality that $(x^1,t^1)\in I_2$ and that $t^1=1$. In case $(x^0,t^0)$ also belongs to $I_2$ we assume, switching the role of the end points if necessary, that $(\bar{x},\bar{t})$ is closer to $(x^1,t^1)$ than to $(x^0,t^0)$.

Consider a ball in $U\times (0,1)$ with center $(\bar{x},\bar{t})$ and radius $\epsilon$ with $\epsilon$ to be determined. For any $(x,t)$ in the $\epsilon$- ball let $(\hat{x},\hat{t})$ be the point  where the ray from $(x^0,t^0)$ to $(x,t)$ strikes the boundary of $U\times(0,1)$
for the second time. As $(x^1,t^1)\in I_2$ by choosing the $\epsilon$ sufficiently small we will have $(\hat{x},\hat{t})\in U\times \{1\}$. If
$(x,t)=s(\hat{x},1)+(1-s)(x^1,0)$ by convexity of $u$ and $u(x^0,0)=0$, we have 
$$u(x,t) \leq s\tilde{\varphi}_1(\hat{x})+(1-s)u(x^0,0)\leq \tilde{\varphi}_1(\hat{x}),$$
 where $\tilde{\varphi}_1=\varphi_1+l_{\bar{x},\bar{t}}$. By Taylor expansion (note that $D_x\tilde{\varphi}_1(x^1)=0$ as it is a local minimum point on the $t=1$ hyperplane) we have
$$ \tilde{\varphi}_1(\hat{x})-\tilde{\varphi}_1(x^1)\leq C|\hat{x}-x^1|^2$$
for a constant $C$ dependent on the $C^2$ norm of $\varphi_1$ and the Lipschitz norm of $u$.

If we could now  prove that 
\begin{equation}\label{needed}
|\hat{x}-x^1|^2\leq C(|x-\bar{x}|^2+|t-\bar{t}|^2),
\end{equation}
for some $C$ under control we are done. Indeed, we have 
$$\frac{|\hat{x}-x^1|}{|(x^0,t^0)-(x^1,t^1)|}=\frac{\sin(\beta)}{\sin(\alpha+\beta)}
$$
and 
$$\frac{|(\bar{x},\bar{t})-(x^0,t^0)|}{|(x,t)-(\bar{x},\bar{t})|}=\frac{\sin(\theta)}{\sin(\beta)},$$
where $\alpha$  is the angle between the line $((x^1,1),(x^0,0))$ and the hyperplane $t=1$, $\beta$ is the angle between $((x^0,0),(x^1,1))$ and $((x^0,0),(\hat{x},1))$
and $\theta$ is the angle between $((\bar{x},\bar{t}),(x,t))$ and $((x^0,0),(\hat{x},1))$. From these two equations we obtain 
$$\frac{|\hat{x}-x^1|}{|(x,t)-(\bar{x},\bar{t})|}=\frac{\sin(\theta)}{\sin(\alpha+\beta)}\frac{|(x^0,t^0)-(x^1,t^1)|}{|(\bar{x},\bar{t})-(x^0,t^0)|}.$$

Note that the angle $\alpha+\beta$ is uniformly bounded from below by a constant dependent on $diam(U)$. Also, trivially, $sin(\theta)\leq 1$.

In order to bound the ratio $\frac{|(x^0,t^0)-(x^1,t^1)|}{|(\bar{x},\bar{t})-(x^0,t^0)|}$ consider two cases:

Case 1. If $(x^0,t^0)\in I_3$, then, as $(\bar{x},\bar{t})\in V$ the quantity $|(\bar{x},\bar{t^0})-(x^0,t^0)|$ is bounded from below by a constant dependent on $W$, while $|(x^0,t^0)-(x^1,t^1)|$ is bounded from above by a constant dependent only on $diam(U)$.

Case 2. If $(x^0,t^0)$ also belongs to $I_2$ then, recalling that $(\bar{x},\bar{t})$ is closer to $(x^1,t^1)$ than to $(x^0,t^0)$, we have 
$\frac{|(x^0,t^0)-(x^1,t^1)|}{|(\bar{x},\bar{t})-(x^0,t^0)|}\leq 2$.

In both cases we get that (\ref{needed}) holds, and hence the proof is concluded.
 \end{proof}
\begin{rem} In the proof we haven't made use of the smoothness of $\partial U$. In particular the regularity still holds for any strictly convex bounded domain with $C^{1,1}$ boundary.
\end{rem}

The proof of Corollary \ref{corol} follows similar lines once we translate the problem on the logarithmic image of $\Omega$. Below we sketch the details.
\begin{proof}[Proof of Corollary \ref{corol}] 
Let $U$ be the logarithmic image of the Reinhardt domain $\Omega$. Then in $U\times(0,1)$ the function 
$$v(x,t):=\phi(e^{x_1},\cdots,e^{x_n},e^{t})$$
solves the homogeneous real Monge-Amp\`ere equation
with the corresponding boundary values.

If the axis set $M$ is empty then $U$ is bounded, smooth, strictly convex domain and Theorem \ref{22} applies yielding almost global $C^{1,1}$ smoothness of $v$ which in turn implies the claimed regularity of $\phi$.

If $M\neq\emptyset$ the domain $U$ is unbounded. Again we assume, adding an affine function if necessary, that $v\geq 0,\ v(\bar{x},\bar{t})=0$ 
for some fixed point $(\bar{x},\bar{t})\in U\times(0,1)$. It is easy to see that $\{(x,t)\in \overline{U\times(0,1)}|\ v(x,t)=0\}$ is convex and has no extremal points in $U\times(0,1)$. 
Indeed, arguing in a small ball around such a point we can repeat the argument from the bounded case.

Consider 
$$Z:=\{(x,t)\in\partial(U\times(0,1))\ |\ v(x,t)=0\}.$$

The argument above shows that $(\bar{x},\bar{t})$ is in the convex hull of $Z$ unless there is an infinite ray passing though it. 
The end  point of such a ray has to be in $\partial U\times(0,1)$ and it has to be parallel to the $t=const$ hyperplanes. But the initial function $v$ is bounded and linear on such a ray, hence it is constant there. The position of the end-point forces that $v$=0 along the ray, which contradicts the negativity of the initial $v$ in $U\times(0,1)$. 
As a result $(\bar{x},\bar{t})$ has to be in the convex hull of $Z$.

Just as in the previous proof there is at most one point in $Z$ with $t$-coordinate equal to $0$ and at most one 
with $t$-coordinate equal to $1$, for otherwise we get a contradiction with the strict convexity of the boundary values. 
Again this precludes the existence of a point $(\tilde{x},\tilde{t})$ from $Z$ on $\partial U\times(0,1)$ for it would imply  that $(\tilde{x},t)\in Z$ for every $t\in[0,1]$. 
As a result $Z$ consists of two points and the line segment segment joining them passes through $(\bar{x},\bar{t})$. Let the end points be $(x^0,0), (x^1,1)$ with $x^0\in\overline{U},\ x^1\in \overline{U}$ with at most one being in $\partial U$.

What remains to be done is to show that this line segment is uniformly bounded in length, or equivalently it is not too parallel to the
$t=const$ hyperplanes.

To this end fix two Reinhardt pseudoconvex neighborhoods $\Theta\supset \Theta'$ of $M$ in $\Omega$ which yields  convex neighborhoods of infinity $W, W'$ in $U$. We can assume that $V:=U\setminus W$ is bounded. Shinking  $W'$ if necessary we can assume that the distance between $W'$ and $V$ is equal to  $diam{V}$

Given a line segment $[(x^0,0),(x^1,1)]$ as above with $(\tilde{x},\tilde{t})$ in $V$ we  suppose, without loss of generality, that $(x^0,0)\in V$. If $(x^1,1)$ is in
$V'$ the length of the segment is bounded by a constant dependent on the diameter of $V$  and we conclude as in the bounded domain case.

Hence from now on we assume that $(x^1,1)\in \overline{U}\setminus V'$. 

{\bf Case 1}. Assume that $x^1\in U$.

By Lemma \ref{parallel} we can assume that the vector $x^1-x^0$ almost belongs to the characteristic cone $\Gamma$ of $U$: there is a unit vector $r\in \Gamma$, such 
that the angle between $r$ and $\kappa:=\frac{x^1-x^0}{|x^1-x^0|}$ is less than any preassigned $\varepsilon>0$ if the line segment is long enough.
But then $\kappa=r+\theta$, where $\theta\in\mathbb R^n$ is a vector of length at most $\sqrt{2-2cos(\varepsilon)}$.

{\bf Claim}: There is a $\delta>0$ dependent only on $U,V$ and $\varphi$ such that
$$-D_r\varphi(x^0)\geq \delta.$$

Note that as $s\rightarrow \tilde{\varphi}(s)=\varphi(x^0-sr)$ is a convex function on an infinite ray it follows that
$\tilde{\varphi}$ is increasing in $s$ and $\tilde{\varphi}'(s)=-D_r\varphi(x^0-sr)\geq 0$. Obviously 
$$lim_{s\rightarrow -\infty}-D_r\varphi(x^0-sr)=0.$$
Hence
$$-D_r\varphi(x^0)=\int_{-\infty}^{0}D^2\varphi(x^0-rs)(r,r)ds.$$

Note that the inegral is taken over a ray that intersects $V'$ in a line segment of length at least $diam(V)$ and that $D^2\varphi$ is uniformly
convex there. Thus the claim follows.

Recall that 
\begin{equation}\label{xyzt}
-D_r\psi(x^1)\geq -D_r\varphi (x^0)
\end{equation}
with equality unless $x^0\in \partial U$. This implies 
$$\delta<-D_r\psi(x^1)=-D_{\kappa}\psi(x^1)+D_{\theta}\psi(x^1)\leq -D_{\kappa}\psi(x^1)+C\varepsilon.$$

Fixing $\varepsilon$ sufficiently small we obtain, assuming that $|x^1-x^0|$ is large enough,
$$-D_{\kappa}\psi(x^1)\geq\frac\delta2.$$

But on the other hand
$$-D_{\kappa}\psi((1-s)x^1+sx^0)=-D_{\kappa}\psi(x^1)$$
$$+|x^1-x^0|\int_0^{s}D^2\psi
((1-t)x^1+tx^0)(\kappa,\kappa)dt>-D_{\kappa}\psi(x^1)$$
for any $s\in(0,1)$. Thus
$$\psi(x^0)-\psi(x^1)=\int_0^{1}-D_{(x^1-x^0)}\psi((1-s)x^1+sx^0)ds$$
$$\geq |x^1-x^0|\frac{\delta}2,$$
a contradiction with the uniform bound of $\psi$ if $ |x^1-x^0|$ is too large.

{\bf Case 2}. Let now $x^1$ belong to $\partial U$ forcing $x^0\in U$. The problem with the previous argument is that the inequality (\ref{xyzt}) would be now in the wrong direction.

Instead, we observe that  
$$0=\psi(x^1)-\varphi(x^0)-D_tv(\bar{x},\bar{t})-Dv(\bar{x},\bar{t}).(x^0-x^1),$$
which yields
$$|D_{\kappa}\varphi(x^0)||x^1-x^0|=|Dv(\bar{x},\bar{t}).(x^1-x^0)|\leq C$$
as $v$ is locally uniformly Lipschitz and $\psi$, $\varphi$ are bounded.

Thus 
\begin{equation}\label{lasttt}
|D_{\kappa}\varphi(x^0)|\leq \frac{C}{|x^1-x^0|},
\end{equation}
 but on the other hand from the proof in the first case we know that  $-D_r\varphi(x^0)\geq \delta$ is uniformly positive for any $r\in\Gamma$. Once again exploiting the fact that there is an $r$ in $\Gamma$ very close to $\kappa$ if $|x^1-x^0|$ is sufficiently large, we get $-D_{\kappa}\varphi(x^0)\geq\frac\delta2$, a contradiction with (\ref{lasttt}) if $[x^0,x^1]$ is too long.

As a result we obtain a bound on $|x^1-x^0|$ and hence on the length of the line segment and the proof concludes as in the bounded domain case.

\end{proof}
\begin{rem} One can use also an indirect argument to prove that line segments must have bounded length. Indeed, suppose that
there are sequences of points $(x_n^0,0),\ (x_n^1,1)$, such that $x_n^0$, say, varies in a relatively compact subset of $\overline{U}$, and $v$ is linear along
$[(x_n^0,0),(x_n^1,1)]$. Taking a subsequence if necessary, these segments will converge to a ray in $U\times\{0\}$. As $v$ is bounded, this convergence implies
the constancy of $v(x,0)=\varphi(x)$ along the ray, a contradiction with the strict convexity of $\varphi$. The direct argument however provides an explicit
bound and hence one has a better control on the $C^{1,1}$ bound of the solution.
\end{rem}

 \begin{rem} It is likely that the weak geodesic is also $C^{1,1}$ smooth near $M\times A$.
\end{rem}
 \section{real example}
 In this section we discuss an example of a solution to the problem (\ref{ss}) with smooth data, which fails to be $C^{1,1}$ up to  the boundary.
\begin{exa}\label{realexample} We take \;$\Omega =(-1,1)$, $\varphi_0(x)=2(x^2-1)$, $\varphi_1(x)=x^2-1$. We consider  the following function
\begin{equation*}u(x,t):=
\begin{cases}
2(1-t)\left((\frac{x+t}{1-t})^2-1\right)& \text{if}\;\frac{x+t}{1-t}<\frac{-1}{2};\\
\frac{2x^2}{1+t}+t-2 & \text{if}\; \frac{x+t}{1-t}\geq\frac{-1}{2}\; \text{and}\; \frac{x-t}{1-t}\leq \frac{1}{2};\\
2(1-t)\left((\frac{x-t}{1-t})^2-1\right)& \text{if} \;\frac{x-t}{1-t}\geq\frac{1}{2}.
\end{cases}
\end{equation*}
\end{exa}
It is obvious $u(x,0)=\varphi_0(x)$ and $u(x,1)=\varphi_1(x)$.
We are going to check that the function $u$ is convex and it is indeed a solution of the above Dirichlet problem.
To this end we compute the Hessian matrix of $u$ in the three cases. 

Case 1:  $u_1(x,t): =2(1-t)\left((\frac{x+t}{1-t})^2-1\right)$, when $\frac{x+t}{1-t}<\frac{-1}{2}$. By straightforward computation we obtain
\begin{equation*} D^2u_1(x,t)=
\begin{pmatrix}
   \frac{\partial^2 u}{\partial^2x }       & \frac{\partial^2 u}{\partial x\partial t }  \\
     \frac{\partial^2 u}{\partial x\partial t }     & \frac{\partial^2 u}{\partial^2t }
\end{pmatrix}=\begin{pmatrix}
   \frac{4}{1-t}     & \frac{4(1+x)}{(1-t)^2}  \\
      \frac{4(1+x)}{(1-t)^2}   & \frac{4(1+x)^2}{(1-t)^3 }
\end{pmatrix}.
\end{equation*}
Obviously $D^2u_1$ is semi-postive with vanishing determinant, which implies that $u$ is convex with respect to $(x,t)$. Note also that the second order derivatives blow-up as $(x,t)\rightarrow (-1,1)$.

Case 2:  $u_2(x,t):=\frac{2x^2}{1+t}+t-2$, when $\frac{x+t}{1-t}\geq\frac{-1}{2}\; \text{and}\; \frac{x-t}{1-t}\leq \frac{1}{2}$. In this case the Hessian reads 
\begin{equation*} D^2u_2(x,t)=\begin{pmatrix}
   \frac{4}{1+t}     & \frac{-4x}{(1+t)^2}  \\
      \frac{-4x}{(1+t)^2}   & \frac{4 x^2}{(1+t)^3 }
\end{pmatrix}.
\end{equation*}
 Again $D^2u_2$ is semi-postive with vanishing determinant. Note however that the second derivatives are bounded
 which is in line with Theorem \ref{22}.

Case 3:  $u_3(x,t): =2(1-t)\left((\frac{x-t}{1-t})^2-1\right)$, when $\frac{x-t}{1-t}\geq\frac{1}{2}$. In this case

\begin{equation*} D^2u_3(x,t)=\begin{pmatrix}
   \frac{4}{1-t}     & \frac{4(x-1)}{(1-t)^2}  \\
      \frac{4(x-1)}{(1-t)^2}   & \frac{4(x-1)^2}{(1-t)^3 }
\end{pmatrix}.
\end{equation*}
Once again the Hessian is semi-positive with vanishing determinant. In this case the second order derivatives blow-up as $(x,t)\rightarrow (1,1)$.

We shall use Lemma \ref{Wang} to show that $u_i,\ i=1,2,3$ glue together to a convex function.  We  denote by $U_1$, $U_2$ and $U_3$ the domains of definition of $u_1$, $u_2$  and $ u_3$ respectively.
On $\partial U_1\cap \partial U_2=\{\frac{x+t}{1-t}=\frac{-1}{2}\},$
we have 
$$u_{1}(x,t)=\frac{3(t-1)}{2}=u_2(x,t),$$
while
$$ D u_1(x,t)=(-2,\frac{1}{2})=D u_2(x,t). $$

We have also along $\partial U_2\cap \partial U_3=\{\frac{x-t}{1-t}=\frac{1}{2}\}$
$$u_3(x,t)=\frac{3(t-1)}{2}=u_2(x,y),$$
while
\begin{equation*}
D u_3(x,t)=(2,\frac{1}{2})=D u_2(x,y).
\end{equation*}

Thus $u_1$, $u_2$ and $u_3$ glue together to a convex function $u$ matching the given boundary data. By Lemma \ref{Wang} $det(D^2u)=0$ in the weak sense.

\section{Complex setting}
In this section we construct an example of a geodesic which  joins two smooth strictly  plurisubharmonic functions but is not $C^{1,1}$ up to the boundary of $\Omega\times A.$
\begin{exa}\label{complexexample} Let $\Omega$ be the unit disc in $\C$.We join $\varphi_0(w)=2(|w|^2-1)$ and $\varphi_1(w)=|w|^2-1$ by 
\begin{equation*}
u(w,z)=
\begin{cases} 
2\frac{|w|^2}{|z|^{\log2}}+\log|z|-2 \;\; if\;\;\frac{|w|}{|z|^{log\sqrt{2}}}<\frac{1}{\sqrt{2}};\\
2(1-\log|z|) (e^{\frac{\log|w|^2}{1-\log|z|}}-1)\;\; {\rm otherwise}. \;\; 
\end{cases}
\end{equation*}
\end{exa}
Below we check that $u$ is  plurisubharmonic  with respect to $(w,z)$ and  satisfies $$(dd^cu)^2=0,$$ which means $u$ is a weak geodesic between $\varphi_0$ and $\varphi_1$.

 We check first  the plurisubharmonicity of the two parts of $u$. The complex Hessian of $u$ in $\lbrace\frac{|w|}{|z|^{log\sqrt{2}}}<\frac{1}{\sqrt{2}}\rbrace\cap \mathbb D\times A$ reads

 \begin{equation*}
\begin{pmatrix}
\frac{(\log 2)^2|w|^2}{2|z|^{\log(2)+2}}  &  \frac{-z\bar{w}\log(2)}{|z|^{\log2+2}}   \\
        \frac{-w\bar{z}\log2}{|z|^{\log2+2}}   &  \frac{2}{|z|^{\log2}} 
\end{pmatrix}.
\end{equation*}

The complex Hessian is Hermitian semi-postive with vanishing determinant, which implies that $u$ is plurisubharmonic and satisfies 
$$(dd^cu)^2=0.$$
It can be also checked by direct computation that
all the second order partial derivatives of $u$ remain bounded in this case.

In the remaining part of $\mathbb D\times A$ the complex Hessian of  $u$ is given by.

\begin{equation*}
\begin{pmatrix}
   \frac{(\log|w|^2)^2}{2(1-\log|z|)^3|z|^2}.A(z,w)   &   \frac{\log|w|^2 z\bar{w}}{(1-\log|z|)^2|z|^2|w|^2} .A(z,w) \\
      \frac{\log|w|^2 w\bar{z}}{(1-\log|z|)^2|z|^2|w|^2}.A(z,w)     &  \frac{2}{|w|^2(1-\log|z|)}.A(z,w)
\end{pmatrix},
\end{equation*}
where $A(z,w):=e^{\frac{\log|w|^2}{1-\log|z|}}.$
 
 Again the complex Hessian is Hermitian semi-postive, and hence $u$ is plurisubharmonic with vanishing complex Monge-Amp\`ere 
 operator in this second case. Note that if $(w,z)\rightarrow (w_0,z_0)\in \partial\mathbb D\times\partial A$ the second-order derivatives blow-up.

It remains to check that both parts of $u$ glue together in a plurisubharmonic fashion. Indeed,  to prove that $u$ is plurisubharmonic we 
need to show that $u$ is well glued in a neighborhood of the hypersurface
$$\frac{|w|}{|z|^{log\sqrt{2}}}=\frac{1}{\sqrt{2}}.$$
 We put  $t:= \log|z|$ and $x:=\log|w| $. Then $v(x,t):=u(e^x,e^t)$ becomes
\begin{equation*}
v(x,t)=
\begin{cases} 
v_1(x,t)=2\frac{e^{2x}}{e^{log(2)t}}+t-2 \;\;{\rm if}\;\;x<\log(\sqrt{2})t-\log\sqrt{2}; \\
v_2(x,t)=2(1-t) (e^{\frac{2x}{1-t}}-1)\;\; {\rm if}\;\;x\geq\log(\sqrt{2})t-\log\sqrt{2}.
\end{cases}
\end{equation*}
By Lemma \ref{toric} $v_i,\ i=1,2$ are convex in their domains of definition and $det(D^2v_i)$ vanish.  We will next check that $v$ is globally convex
with vanishing real Monge-Amp\`ere. To this end we shall use Lemma \ref{Wang}. On the line $x=\log(\sqrt{2})t-\log\sqrt{2}$ we have:
$$v_1(x,t)=t-1=v_2(x,t),$$
$$D v_1(x,t)=(-\log(2)+1,2)=D v_2(x,t).$$

 Thus the conditions in Lemma \ref{Wang} are satisfied and  we conclude that $v$ is convex with globally vanishing real Monge-Amp\`ere measure.
 One more application of Lemma \ref{toric} yields that $u$ is plurisubharmonic and $(dd^cu)^2=0$ in the sense of measures, as claimed.
  
  \section{Existence of smooth geodesics}
  
 In this section we shall provide a proof of Theorem \ref{smoothgeodesic}. From the proof of Theorem \ref{22} we know that
 through every point $(\bar{x},\bar{t})$ there is a unique line segment $L_{\bar{x},\bar{t}}$ along which the weak geodesic
 $u$ is linear.  
 
 At this moment we remark that two such segments can meet at the {\it boundary} of $U\times(0,1)$, as Example \ref{realexample} shows.
 
 Denote by
 $\xi=\xi(\bar{x},\bar{t}),\ \eta=\eta(\bar{x},\bar{t})\in \overline{U}$ the points so that $(\xi(\bar{x},\bar{t}),0)$ and $(\eta(\bar{x},\bar{t}),1)$ are the end points of
 $L_{\bar{x},\bar{t}}$. We have already observed in the proof of Theorem \ref{22} that two cases are possible:
 
 {\it Case 1.} Both $\xi$ and $\eta$ belong to $U$.
 
 {\it Case 2.} One of the points $\xi,\eta$ belongs to $\partial U$, while the other belongs to $U$.
 
 In the first case, we recall that 
 \begin{equation}\label{equalitygradients}
  D_x\varphi(\xi(\bar{x},\bar{t}))=D_xu(\xi(\bar{x},\bar{t}),0)=D_xu(\eta(\bar{x},\bar{t}),1)=D_x\psi(\eta(\bar{x},\bar{t})).
 \end{equation}
 \begin{proof}[Proof of Theorem \ref{smoothgeodesic}]
 
 Suppose first that $\partial\varphi(U)\neq\partial\psi(U)$. Observe that both sets are open. Swapping $\varphi$ and $\psi$ if necessary, we may thus suppose that there
 is a point $\xi'\in U$  such that $p'=D\varphi(\xi')$ is a vector that does not belong to $\partial\psi(U)$. As $u$ is $C^{1,1}$ 
 there is a point $(x,t)\in U\times(0,1)$
 sufficiently close to $(\xi',0)$ such that $p:=D_xu(x,t)$ does not belong to $\partial\psi(U)$. Then if $L=[(\xi,0),(\eta,1)]$
 is a segment through $(x,t)$ along which $u$ is linear we must have $\eta\in\partial U$ as otherwise equation \ref{equalitygradients} would be violated.
 
 In a small ball $B\in U$ around $x$ the set  $\{D_xu(z,t)\ |\ (z,t)\in B\}$ is disjoint from $\partial\psi(U)$. This yields a continuous mapping

$$B\ni x\longmapsto \eta(x,t)\in \partial U.$$
Obviously the map cannot be injective and hence we get two points $a_1,a_2\in B$ with the same $\eta$. Continuing the rays 
$[(\eta,1),(a_j,t)), j=1,2$ until they hit the boundary of $U\times(0,1)$ for the second time we obtain two points $(b_1,0),(b_2,0)\in U\times\{0\}$ with the same $\eta$. 
Suppose, translating and rotating the coordiantes if necessary, that $b_1=0$ while $b_2=(b,0,\cdots,0)$ for some $b>0$.
But then $D_{x_1}u$ is constant
on the segments $[(b_j,0),(\eta,1)), j=1,2$. As a result for any $t\in[0,1)$
$$\int_0^1D^2_{x_1x_1}\varphi(sb,0\cdots,0)bds=D_{x_1}\varphi(b,0,\cdots,0)-D_{x_1}\varphi(0)$$
$$=D_{x_1}u(t\eta+(1-t)(b,0,\cdots,0),t)-D_{x_1}u(t\eta,t)$$
$$=\int_0^{1-t}D^2_{x_1x_1}u(t\eta+s(b,0,\cdots,0),t)bds.$$
Note that both integrals integrate positive functions as $u$ is convex. The domain of integration shrinks to a point as $t\rightarrow 1^-$
which implies that $D^2_{x_1x_1}u$ becomes arbitrarily large somewhere along the integral path.

 Suppose now that equality 
 \begin{equation}\label{gradeq}
  \partial\varphi(U)=\partial\psi(U)
 \end{equation}
 holds. Let $(\bar{x},\bar{t})$ be a point in $U\times(0,1)$.
 Then 
 $$v(x,t):=u(x,t)-u(\bar{x},\bar{t})-Du(\bar{x},\bar{t}).(x-\bar{x},t-\bar{t})$$
 is nonnegative and vanishes on the line segment $L_{\bar{x},\bar{t}}$. Suppose, without loss of generality, that the end point
 $(x^0,t^0)$ belongs to $U\times\lbrace0\rbrace$. Then repeating the same argument as in the proof of equation 
 (\ref{equalitygradients}) $D_xv(x^0,0)=0$, thus $D_xu(x^0,0)=D\varphi(x^0)=D_xu(x,t)$, implying that
 $D_xu(x,t)$ belongs to $\partial\varphi(U)=\partial\psi(U)$. We claim that this forces that the second end point $(x^1,t^1)$ belongs to
 $U\times\lbrace1\rbrace$.
 
 Indeed, from the strict convexity of $\psi$ and the equality (\ref{gradeq}) we obtain a unique point $\hat{x}\in U$ so that
 $D\psi(\hat{x})=p.$ But if $x^1\in\partial U$ then $D\psi(x^1).(x^1-\hat{x})\leq p.(x^1-\hat{x})$, as $v$ is nonnegative on
 the segment $[(\hat{x},1),(x^1,1)]$ and vanish at $(x^1,1)$. 
 Then
 $$0<\int_0^1D^2_{x^1-\hat{x},x^1-\hat{x}}\psi(sx^1+(1-s)\hat{x})ds$$
 $$=D\psi(x^1).(x^1-\hat{x})-D\psi(\hat{x}).(x^1-\hat{x})$$
 $$\leq p.(x^1-\hat{x})-p.(x^1-\hat{x})\leq 0,$$
 a contradiction.
 
 As a result for any point $(x,t)$ there are uniquely defined 
 $$\xi(x,t),\eta(x,t)\in U,$$
 so that $u$ restricted to the segment
 $[(\xi(x,t),0),(\eta(x,t),1)]\ni (x,t)$ is linear. Hence
 \begin{equation}\label{linearform}
 u(x,t)=u(t(\eta,1)+(1-t)(\xi,0))=t\psi(\eta(x,t))+(1-t)\varphi(\xi(x,t)).
 \end{equation}
Smoothness of $u$ now follows from Lemma \ref{LiWang}.
 
 It remains to prove strict convexity in the space variables. Fix any vector $T\in\mathbb R^n,\ T\neq 0$. Then
 \begin{equation}\label{q}
  D^2_{TT}u(x,t)=tD^2\psi(\eta)(D_T\eta)^2+(1-t)D^2\varphi(\xi)(D_T\xi)^2
 \end{equation}
 $$+tD\psi(\eta)D^2_{TT}\eta+(1-t)D\varphi(\xi)D^2_{TT}(\xi).$$
 
 Recall that $(1-t)\xi+t\eta=x$ and $D\varphi(\xi)=D\psi(\eta)$. These imply that the last row in (\ref{q}) vanishes.
 On the other hand the strict convexity of $\psi$ and $\eta$ coupled with $(1-t)\xi_T+t\eta_T=T$ yields that the sum $tD^2\psi(\eta)(D_T\eta)^2+(1-t)D^2\varphi(\xi)(D_T\xi)^2$
 is strictly positive. Gathering these, we obtain
 $$D^2_{TT}u(x,t)>0$$
up to $\partial U$, as claimed.
 \end{proof}

\end{document}

 ns we obtain Hessian matrix:

By same method as above we search the components of $D^2u_2$:
\begin{eqnarray*}
u_{xx}&=&\left(4 \left(\frac{x}{1+t}\right)\right)_x=\frac{4}{t+1}.
\end{eqnarray*}
\begin{eqnarray*}
u_{xt}&=&\left(4 \left(\frac{x}{1+t}\right)\right)_t=\frac{-4x}{(1+t)^2}=u_{tx}.
\end{eqnarray*}
\begin{eqnarray*}
u_{tt}&=&\left(\frac{-2x^2}{(1+t)^2}+1\right)_{t}
=\frac{4x^2}{(1+t)^2}
\end{eqnarray*}
>From the above calculations we obtain Hessian matrix:

\begin{eqnarray*}
&u_{xx}=&\left(4 \left(\frac{x-t}{1-t}\right)\right)_x=\frac{4}{1-t}.
\end{eqnarray*}
\begin{eqnarray*}
u_{xt}&=&\left(4 \left(\frac{x-t}{1-t}\right)\right)_t=\frac{4(x-1)}{(1-t)^2}=u_{tx}.
\end{eqnarray*}
\begin{eqnarray*}
u_{tt}&=&\left(-2\left(\left(\frac{x-t}{1-t}\right)^2-1\right)+\frac{4(x-t)(x-1)}{(1-t)^2}\right)_{t}\\
&=&\frac{-4(x-t)(x-1)}{(1-t)^3}+\frac{4(2x-t-1)(x-1)}{(1-t)^3}\\
&=& \frac{4(x-1)^2}{(1-t)^3}.
\end{eqnarray*}
>From the above calculations we obtain Hessian matrix:

>From the above analysis, we conclude that the function $u$ is convex in $(-1,1)\times(0,1)$ with  respect to $(x,t)$, which proves that $u$ is solution to problem (\ref{ss}) in $(-1,1)\times (0,1)$.

We compute the norm of $D^2u$ on $[-1,1]\times [0,1]$. We have by definition:
\begin{eqnarray*}
\max_{[-1,1]\times [0,1]}||D^2u||_{2}&=&\max_{[-1,1]\times [0,1]\cap \Omega_1}||D^2u_1||_{2}+\max_{[-1,1]\times [0,1]\cap \Omega_2}||D^2u_2||_{2}\\
&+&\max_{[-1,1]\times [0,1]\cap \Omega_3}||D^2u_3||_{2}.
\end{eqnarray*}
The expression of the above competent are the following
$$||D^2u_1||_{2}^2=\frac{16}{(1+t)^2}+\frac{32x^2}{(1+t)^4}+\frac{16x^2}{(1+t)^6} \; \text{in} \;\Omega_1, $$
$$||D^2u_2||_{2}^2=\frac{16}{(1-t)^2}+\frac{32(x+1)^2}{(1-t)^4}+\frac{16(x+1)^4}{(1-t)^6}\; \text{in} \;\Omega_2,$$
and 
$$||D^2u_3||_{2}^2=\frac{16}{(1-t)^2}+\frac{32(x-1)^2}{(1-t)^4}+\frac{16(x-1)^4}{(1-t)^6} \; \text{in}\; \Omega_3.$$ Then we get 
$$\max_{[-1,1]\times [0,1]}||D^2u||_{2}=+\infty,$$
 we conclude that $u$ is not $C^{1,1}$ up the boundary of $(-1,1)\times (0,1)$.

\begin{equation*}
D_{z,w}^2 u=
\begin{pmatrix}
    u_{z\bar{z}}   &u_{w\bar{z}}   \\
    u_{z\bar{w}}      & u_{w\bar{w}}  
\end{pmatrix}.
\end{equation*}
We calculate each component of $D_{z,w}^2 u$ in the first expression of $u$. We start by $u_{z\bar{z}}$:
\begin{eqnarray*}
u_{z\bar{z}}&=&\left(\frac{-\log(2)|w|^2\bar{z}}{|z|^{\log(2)}|z|}+\frac{\bar{z}}{2|z|}\right)_{\bar{z}}\\
&=&\frac{-\log(2)|w|^2\bar{z}}{|z|^2}\left(\frac{1}{|z|^{\log(2)}}\right)_{\bar{z}}-\frac{\log(2)|w|^2}{|z|^{\log(2)}}\left(\frac{\bar{z}}{|z|^2}\right)_{\bar{z}}+\left(\frac{\bar{z}}{2|z|^2}\right)_{\bar{z}}\\
&=&\frac{(\log 2)^2|w|^2}{2|z|^{\log(2)+2}}.
\end{eqnarray*}
We calculate by the same way $u_{z\bar{w}}$:
 \begin{eqnarray*}
u_{z\bar{w}}&=&\left(\frac{-\log(2)|w|^2\bar{z}}{|z|^{\log(2)}|z|^2}+\frac{\bar{z}}{2|z|^2}\right)_{\bar{w}}\\
&=&\frac{-\log(2)w\bar{z}}{|z|^{\log(2)+2}} 
\end{eqnarray*}
 We change the roles of $z$ and $w$ in the above expression we obtain $u_{w\bar{z}}$:
 $$u_{w\bar{z}}=\frac{-\log(2)z\bar{w}}{|z|^{\log(2)+2}}.$$ 
 Finally, we calculate $u_{w\bar{w}}$:
 \begin{eqnarray*}
 u_{w\bar{w}}&=&\left(\frac{2\bar{w}}{|z|^{\log(2)}}\right)_{\bar{w}}\\
 &=&\frac{2}{|z|^{\log(2)}}
 \end{eqnarray*}
>From the above calculation for each component of  $D_{z,w}^2 u$ we get:

\begin{eqnarray*}
u_{z\bar{z}}&=&\left(-\frac{\bar{z}}{|z|^2}\left(e^{\frac{\log|w|^2}{1-\log|z|}}-1\right)+\frac{\log|w|^2\bar{z}}{|z|^2(1-\log|z|)}e^{\frac{\log|w|^2}{1-\log|z|}}\right)_{\bar{z}}\\
&=&\left(-\frac{\log|w|^2}{|z|^2(1-\log|z|)^2}+\frac{\log|w|^2}{|z|^2(1-\log|z|)^2}+\frac{(\log|w|^2)^2}{2|z|^2(1-\log|z|)^3}\right).e^{\frac{\log|w|^2}{1-\log|z|}}\\
&=& \frac{(\log|w|^2)^2}{2|z|^2(1-\log|z|)^3}.e^{\frac{\log|w|^2}{1-\log|z|}}
\end{eqnarray*}
We pass now to $u_{z\bar{w}}$:
\begin{eqnarray*}
u_{z\bar{w}}&=&\left(-\frac{\bar{z}}{|z|^2}\left(e^{\frac{\log|w|^2}{1-\log|z|}}-1\right)+\frac{\log|w|^2\bar{z}}{|z|^2(1-\log|z|)}e^{\frac{\log|w|^2}{1-\log|z|}}\right)_{\bar{w}}\\
&=&\left(-\frac{\bar{z}w}{|z|^2|w|^2(1-\log|z|)}+\frac{-\bar{z}w}{|z|^2|w|^2(1-\log|z|)}+\frac{\log|w|^2\bar{z}w}{|z|^2|w|^2(1-\log|z|)}\right).e^{\frac{\log|w|^2}{1-\log|z|}}\\
&=& \frac{\log|w|^2\bar{z}w}{|z|^2|w|^2(1-\log|z|)}e^{\frac{\log|w|^2}{1-\log|z|}}.
\end{eqnarray*}
We change the roles of $z$ and $w$ in the above expression we obtain $u_{w\bar{z}}$:
$$u_{w\bar{z}}=\frac{\log|w|^2\bar{w}z}{|z|^2|w|^2(1-\log|z|)}e^{\frac{\log|w|^2}{1-\log|z|}}.$$
Finally, we calculate $u_{w\bar{w}}$:
\begin{eqnarray*}
u_{w\bar{w}}&=&\left(\frac{2\bar{w}}{|w|^2}e^{\frac{\log|w|^2}{1-\log|z|}}\right)_{\bar{w}}\\
&=&\frac{2}{|w|^2(1-\log|z|)}e^{\frac{\log|w|^2}{1-\log|z|}}.
\end{eqnarray*}
>From the calculation of each component of $D_{z,w}^2 u$ we obtain:

Finally, from the fact that $$\det(D_{z,w}^2 u)=0 \;\text{in}\; \D\times A,$$ $$u|_{\D\times\{|z|=1\}}=2(|w|^2-1) \; \text{and} \; u|_{\D\times\{|z|=e\}}u=|w|^2-1$$ and the plurisubharmonicity with respect to $(z,w)$, we conclude that $u$ is a geodesic joins $2(|w|^2-1)$ and $|w|^2-1$.

The norm of the matrix $D_{z,w}^2 u$, when $\frac{|w|}{|z|^{\log\sqrt{2}}}<\frac{1}{\sqrt{2}}$ is giving as follows:$$
||D_{z,w}^2 u||_2^2=\frac{(\log 2)^4|w|^4}{4|z|^{2\log(2)+4}}+\frac{2(\log(2))^2|w|^2}{|z|^{2\log2+2}}+\frac{4}{|z|^{2\log2}}.$$
When $\frac{|w|}{|z|^{\log\sqrt{2}}}\geq \frac{1}{\sqrt{2}}$ the norm of $D_{z,w}^2 u$ is giving as follows:
$$||D_{z,w}^2 u||_2^2= \left(\frac{(\log|w|^2)^4}{4(1-\log|z|)^6|z|^4}+\frac{(\log|w|^2)^2}{(1-\log|z|)^4|z|^2|w|^2}+\frac{4}{|w|^4(1-\log|z|)^2}\right).e^{\frac{2\log|w|^2}{1-\log|z|}}$$

$$\max_{\D\times \bar{A}}||D_{z,w}^2 u||_2=\max_{\D\times\bar{A}\cap\left(\frac{|w|}{|z|^{\log\sqrt{2}}}<\frac{1}{\sqrt{2}}\right)}||D_{z,w}^2 u||_2+\max_{\D\times \bar{A}\cap\left(\frac{|w|}{|z|^{\log\sqrt{2}}}\geq \frac{1}{\sqrt{2}}\right)}||D_{z,w}^2 u||_2=+\infty,$$
this implies $u$ is not $C^{1,1}$ up to the boundary of $\D\times A$.

We have $det(D_{z,w}^2 u)$: 
\begin{eqnarray*}
det(D_{z,w}^2 u)&=&\left(\frac{(\log 2)^2|w|^2}{2|z|^{\log(2)+2}} \right).\left(\frac{2}{|z|^{\log2}} \right)-\left(\frac{-w\bar{z}\log2}{|z|^{\log2+2}}\right).\left(\frac{-z\bar{w}\log(2)}{|z|^{\log2+2}}\right)\\
&=&\frac{(\log 2)^2|w|^2}{|z|^{2\log(2)+2}}-\frac{(\log(2))^2|w|^2|z|^2}{|z|^{2\log2+4}}\\
&=&0.
\end{eqnarray*}

As $u$ is now known to be locally
 $C^{1,1}$ in $U\times(0,1)$, we know that $Du$ exists at every point and determines uniquely the slope of $L_{\bar{x},\bar{t}}$. Thus
 
 Next we claim that 
 $$U\subset\lbrace \xi(x,t)\ |\ (x,t)\in\ U\times (0,1)\rbrace\subset\overline{U}.$$

 We shall repeat the arguments from \cite{CNS86}, except for the very last step.

According to lemma (\ref{lem1}) $(x^0,t^0)$ lies in a simplex $S$ with vertices on $\partial (U\times (0,1))$ on which $u\equiv0$. 
 We shall use induction on $k$-the dimension of the simplex  to prove (\ref{am}).

Suppose we have proved the result (\ref{am}) for all $(x^0,t^0)$ lying in any $(k-1)$-dimensional simplex with stated properties and with constant $C=C_{k-1}$. We wish to prove it for $(x^0,t^0)$ in a $k$-dimensional  simplex $S$ with some constant $C_k$. Let $(x^1,t^1)$ be the closest point to $(x^0,t^0)$ on some of the $(k-1)$-dimensional faces of $S$. We put
\begin{equation*}
(y^s,t^s):=2(x^0,t^0)-(x^1,t^1).
\end{equation*}
By construction $(y^s,t^s)$ lies in $S$. By induction, in the ball $B$ around $(x^1,t^1)$ with radius $\epsilon(x^1,t^1)$, we have 
$$u(x,t) \leq C_{k-1}(|x-x^1|^2+|t-t^1|^2).$$
Using the above inequality, the convexity of $u$ and the equality $u(y^s,t^s)=0$ we obtain
\begin{eqnarray*}
u(x,t)&=&u((x,t)-\frac{1}{2}(y^s,t^s)+\frac{1}{2}(y^s,t^s))\\
&=&u(\frac{1}{2}(2(x-x^0)+x^1),2(t-t^0)+t^1))+\frac{1}{2}(y_s,t_s))\\
&\leq &\frac{1}{2} u((2(x-x^0)+x^1,2(t-t^0)+t^1))\\
&\leq & 2 C_{k-1}(|x-x^0|^2+|t-t^0|^2),
\end{eqnarray*}
provided $|(x,t)-(x^0,t^0)|\leq\epsilon(x^0,t^0)=\frac{\epsilon(x^1,t^1)}{2}.$ Thus we have established (\ref{am}) with constant $C_k=2C_{k-1}.$ 
Consequently (\ref{am}) holds for any $(x^0,t^0)$ in $\Omega\times (0,1)$ with $C=2^{n-1}C_1$ if it holds in the case $k=1$.

 Suppose that there is a segment $L$ with end points $(x^0,t^0)$, $(x^1,t^1)$ on $\partial(U\times (0,1))$, on which $u=0$, while $u\geq 0$ on  $ U \times (0,1)$